\documentclass[12pt,reqno]{article}

\usepackage[usenames]{color}
\usepackage{amssymb}
\usepackage{graphicx}
\usepackage{amscd}

\usepackage{lscape}
\usepackage{tikz}
\usepackage{tikz-cd}
\usepackage{tkz-fct}
\usepackage{pgfplots}
\usetikzlibrary{matrix}
\usetikzlibrary{fit,shapes}
\usetikzlibrary{positioning, calc}
\usepackage{colortbl}

\usepackage{amsthm}
\newtheorem{theorem}{Theorem}

\newtheorem{proposition}[theorem]{Proposition}
\newtheorem{corollary}[theorem]{Corollary}

\theoremstyle{definition}

\newtheorem{example}[theorem]{Example}

\usepackage[colorlinks=true,
linkcolor=webgreen, filecolor=webbrown,
citecolor=webgreen]{hyperref}

\definecolor{webgreen}{rgb}{0,.5,0}
\definecolor{webbrown}{rgb}{.6,0,0}

\usepackage{color}
\usepackage{float}
\usepackage{graphics,amsmath,amssymb}
\usepackage{amsfonts}
\usepackage{latexsym}
\usepackage{epsf}

\setlength{\textwidth}{6.5in} \setlength{\oddsidemargin}{.1in}
\setlength{\evensidemargin}{.1in} \setlength{\topmargin}{-.5in}
\setlength{\textheight}{8.9in}

\newcommand{\seqnum}[1]{\href{http://oeis.org/#1}{\underline{#1}}}

\begin{document}

\begin{center}
\vskip 1cm{\LARGE\bf Notes on Riordan arrays and lattice paths} \vskip 1cm \large
Paul Barry\\
School of Science\\
South East Technological University\\
Ireland\\
\href{mailto:pbarry@wit.ie}{\tt pbarry@wit.ie}
\end{center}
\vskip .2 in

\begin{abstract} In this note, we explore links between Riordan arrays and lattice paths. We begin by describing Riordan arrays, and some of their generalizations, including rectifications and triangulations. We the consider Riordan array links to lattice paths with  steps of type $(a,b)$, where $a$ and $b$ are nonnegative. We consider common Riordan arrays that are linked to lattice paths, as well as showing links between almost Riordan arrays and lattice paths. We then consider lattice paths with  step sets that include downward steps, and show how the $A$-matrix characterization of Riordan arrays plays a key role in analysing corresponding Riordan arrays.\end{abstract}

\section{Preliminaries on Riordan arrays}
A Riordan array $(g(x), f(x))$ \cite{book, book2} is defined by two power series $g(x)$ and $f(x)$, were
$$g(x) \in \mathcal{F}_0 = \{\sum_{n=0}^{\infty} a_n x^n\,|\, a_0 \ne 0, a_n \in R\},$$ and
$$f(x) \in \mathcal{F}_1 =\{\sum_{n=1}^{\infty} a_n x^n\,|\, a_0 = 0, a_1 \ne 0, a_n \in R\} $$
where $R$ is any ring for which the following operations are valid (for instance, $R$ could be the ring of integers $\mathbb{Z}$, or any of the fields
$\mathbb{Q}, \mathbb{R}, \mathbb{C}$). We have a product
$$(g(x), f(x))\cdot (u(x), v(x))=(g(x)u(f(x)), v(f(x)),$$ with identity $(1,x)$, and an inverse
$$(g(x), f(x))^{-1} = \left(\frac{1}{g(\bar{f}(x))}, \bar{f}(x)\right),$$ where
$\bar{f}(x)=f^{\langle -1 \rangle}(x)$ is the compositional inverse of $f$ (thus $\bar{f}(x)$ is the solution $u(x)$ of the equation $f(u(x))=x$ for which we have $u(0)=0$).
With these operations the set of Riordan arrays (over $R$) becomes a group. There is a matrix representation of this group. To each Riordan array $(g(x), f(x))$ we associate the matrix $(a_{n,k})$, with
$a_{n,k} \in R$, where
$$a_{n,k}=[x^n] g(x)f(x)^k,$$ where $[x^n]$ \cite{Method} is the functional operating on power series that extracts the coefficient of $x^n$. The matrix $(a_{n,k})$ will then be an invertible lower-triangular matrix, and the product rule in the group becomes ordinary matrix multiplication. The product rule embodies the ``fundamental theorem of Riordan arrays'', which describes how a Riordan array $(g(x), f(x))$ operates on a power series $a(x)$. We have
$$(g(x), f(x))\cdot a(x)=g(x)a(f(x)).$$
For instance, the row sums of a Riordan array $(g(x), f(x))$ have their generating function given by
$$(g(x), f(x))\cdot \frac{1}{1-x}= \frac{g(x)}{1-f(x)}.$$
A Riordan array of the form $(g(x),xg(x))$ is called a Bell matrix. Such matrices form a subgroup of the Riordan group.
\begin{example}
The Riordan array $\left(\frac{1}{1-x}, \frac{x}{1-x}\right)$ is represented by the familiar Pascal's triangle matrix $\left(\binom{n}{k}\right)$ (also called the \emph{binomial matrix}) that begins
$$\left(
\begin{array}{ccccccc}
 1 & 0 & 0 & 0 & 0 & 0 & 0 \\
 1 & 1 & 0 & 0 & 0 & 0 & 0 \\
 1 & 2 & 1 & 0 & 0 & 0 & 0 \\
 1 & 3 & 3 & 1 & 0 & 0 & 0 \\
 1 & 4 & 6 & 4 & 1 & 0 & 0 \\
 1 & 5 & 10 & 10 & 5 & 1 & 0 \\
 1 & 6 & 15 & 20 & 15 & 6 & 1 \\
\end{array}
\right).$$
This is a special example, in that it is ``centrally symmetric''. It is sequence A007318 in Sloane's On-Line Encyclopedia of Integer Sequences \cite{SL1, SL2}. Another centrally symmetric or Pascal-like Riordan array is given by
$$\left(\frac{1}{1-x}, \frac{x(1+x)}{1-x}\right),$$ whose matrix representation begins
$$\left(
\begin{array}{ccccccc}
 1 & 0 & 0 & 0 & 0 & 0 & 0 \\
 1 & 1 & 0 & 0 & 0 & 0 & 0 \\
 1 & 3 & 1 & 0 & 0 & 0 & 0 \\
 1 & 5 & 5 & 1 & 0 & 0 & 0 \\
 1 & 7 & 13 & 7 & 1 & 0 & 0 \\
 1 & 9 & 25 & 25 & 9 & 1 & 0 \\
 1 & 11 & 41 & 63 & 41 & 11 & 1 \\
\end{array}
\right).$$ This is the Delannoy triangle, \seqnum{A008288} (we shall distinguish between the Delannoy triangle and the Delannoy square later). All Riordan arrays that are centrally symmetric are of the form
$\left(\frac{1}{1-x}, \frac{x(1+rx)}{1-x}\right)$.
\end{example}
As two-dimensional objects, Riordan arrays have a bivariate generating functions $A(x,y)=\sum_{n,k} a_{n,k}x^n y^k$. We have in fact that
$$A(x,y)=(g(x), f(x))\cdot \frac{1}{1-yx}=\frac{g(x)}{1-yf(x)}.$$ The first column will have generating function $A(x,0)$, while the row sums will have generating function $A(x,1)$.
\begin{example}
For Pascal's triangle, we have
$$A(x,y)=\frac{\frac{1}{1-x}}{1-y\frac{x}{1-x}}=\frac{1}{1-x-xy},$$ while for the Delannoy triangle, we have
$$A(x,y)=\frac{\frac{1}{1-x}}{1-y\frac{x(1+x)}{1-x}}=\frac{1}{1-x-xy-x^2y}.$$
\end{example}
We shall use the notation
$$\mathcal{G}(g(x), f(x))=A(x,y)$$ where appropriate in the following.

By the \emph{rectification} of a Riordan array we shall understand the square matrix with bivariate generating function
$$ \frac{g(x)}{1-y \frac{f(x)}{x}}.$$ We can represent this as $\left(g(x), \frac{f(x)}{x}\right)$. Using the rules of the $[x^n]$ functional, we have
$$[x^n] g(x) \left(\frac{f(x)}{x}\right)^k = [x^{n+k}] g(x)f(x)^k,$$ and hence this ``rectified'' matrix has general element $a_{n+k,k}$.
\begin{example}
The rectification of the binomial triangle (Pascal's triangle) is given by $\left(\binom{n+k}{k}\right)$, which is the square matrix that begins
$$\left(
\begin{array}{ccccccc}
 1 & 1 & 1 & 1 & 1 & 1 & 1 \\
 1 & 2 & 3 & 4 & 5 & 6 & 7 \\
 1 & 3 & 6 & 10 & 15 & 21 & 28 \\
 1 & 4 & 10 & 20 & 35 & 56 & 84 \\
 1 & 5 & 15 & 35 & 70 & 126 & 210 \\
 1 & 6 & 21 & 56 & 126 & 252 & 462 \\
 1 & 7 & 28 & 84 & 240 & 462 & 924 \\
\end{array}
\right).$$
This matrix has generating function
$$A\left(x, \frac{y}{x}\right)=\frac{1}{1-x-y},$$ where
$$A(x,y)=\frac{1}{1-x-xy}$$ is the generating function of Pascal's triangle.
In a similar way, the rectification of the Delannoy triangle gives us the Delannoy square which has generating function
$$\frac{1}{1-x-y-xy}.$$
\end{example}
The (vertical) \emph{stretching} of a Riordan array $(g(x), f(x))$, represented by $(g(x), xf(x))$, is the matrix with generating function
$$\frac{g(x)}{1-yxf(x)}.$$ The row sums of this matrix are the \emph{diagonal sums} of the original Riordan array $(g(x), f(x))$. This means that the generating function of the diagonal sums of a Riordan array $(g(x), f(x))$ is given by $\frac{g(x)}{1-xf(x)}$.
\begin{example} The stretching of Pascal's triangle $\left(\frac{1}{1-x}, \frac{x}{1-x}\right)$ is given by $\left(\frac{1}{1-x}, \frac{x^2}{1-x}\right)$.
This is the matrix $\left(\binom{n-k}{k}\right)$ which begins
$$\left(
\begin{array}{ccccccc}
 1 & 0 & 0 & 0 & 0 & 0 & 0 \\
 1 & 0 & 0 & 0 & 0 & 0 & 0 \\
 1 & 1 & 0 & 0 & 0 & 0 & 0 \\
 1 & 2 & 0 & 0 & 0 & 0 & 0 \\
 1 & 3 & 1 & 0 & 0 & 0 & 0 \\
 1 & 4 & 3 & 0 & 0 & 0 & 0 \\
 1 & 5 & 6 & 1 & 0 & 0 & 0 \\
\end{array}
\right).$$
The row sums in this case are the positive Fibonacci numbers with generating function
$$\frac{\frac{1}{1-x}}{1-\frac{x^2}{1-x}}=\frac{1}{1-x-x^2}.$$
\end{example}
The \emph{reversal} of a lower-triangular matrix with generating function $A(x,y)$ is the lower-triangular matrix with generating function
$$A\left(xy, \frac{1}{y}\right).$$ If the matrix in question is a Riordan array $(g(x),f(x))$ then the elements of the reversal are given by
$$a_{n,n-k}=[x^n] g(x)f(x)^{n-k}.$$

Note that we only consider ordinary generating functions. The Riordan arrays we consider are sometimes called ordinary Riordan arrays, to distinguish them from ones that use, for instance, exponential generating functions.

\section{Characterizations of Riordan arrays}
An important feature of Riordan arrays \cite{book, book2, Survey, SGWW} is that they have a sequence characterization. Specifically, a lower-triangular array $(t_{n,k})_{0 \le n,k \le \infty}$ is a Riordan array if and only if there exists a sequence $a_n, n\ge 0$, such that
$$t_{n,k}=\sum_{i=0}^{\infty}a_i t_{n-1,k-1+i}.$$
This sum is actually a finite sum, since the matrix is lower-triangular. For such a matrix $M$, we let  $$P=M^{-1}\overline{M},$$ where $\overline{M}$ is the matrix $M$ with its top row removed. Then the matrix $P$ has the form that begins
$$\left(
\begin{array}{ccccccc}
 z_0 & a_0 & 0 & 0 & 0 & 0 & 0 \\
 z_1 & a_1 & a_0 & 0 & 0 & 0 & 0 \\
 z_2 & a_2 & a_1 & a_0 & 0 & 0 & 0 \\
 z_3 & a_3 & a_2 & a_1 & a_0 & 0 & 0 \\
 z_4 & a_4 & a_3 & a_2 & a_1 & a_0 & 0 \\
 z_5 & a_5 & a_4 & a_3 & a_2 & a_1 & a_0 \\
 z_6 & a_6 & a_5 & a_4 & a_3 & a_2 & a_1 \\
\end{array}
\right).$$

Here, $z_0,z_1,z_2,\ldots$ is an ancillary sequence which exists for any lower-triangular matrix. The matrix $P$ is called the \emph{production matrix} of the Riordan matrix $M$. The sequence characterization of renewal arrays was first described by Rogers \cite{He_A, Rogers}.
An alternative approach to the sequence characterization of a Riordan array is to use a matrix characterization \cite{He_M, Merlini}.  One form that such a matrix characterization may take is the following \cite{He_M, Merlini}.
\begin{theorem} A lower-triangular array $(t_{n,k})_{0 \le n,k \le \infty}$ is a Riordan array if and only if there exists another array $A=(a_{i,j})_{i,j \in \mathbb{N}_0}$ with $a_{0,0} \ne 0$, and a sequence $(\rho_j)_{j \in \mathbb{N}_0}$ such that
$$t_{n+1,k+1}=\sum_{i \ge 0} \sum_{j \ge 0} a_{i,j} t_{n-i, k+j} + \sum_{j \ge 0} \rho_j t_{n+1,k+j+2}.$$
\end{theorem}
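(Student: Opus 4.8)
The plan is to handle the two implications separately, since the ``only if'' direction is almost immediate while the converse carries all the content. For the forward direction, assume $(t_{n,k})=(g(x),f(x))$ is a Riordan array. By the sequence characterization recalled above it has an $A$-sequence $(a_j)_{j\ge 0}$ with $t_{n+1,k+1}=\sum_{j\ge 0}a_jt_{n,k+j}$, and moreover $a_0\ne 0$, since $A(t):=\sum_j a_jt^j$ satisfies $f(x)=xA(f(x))$ and so $a_0=A(0)=[x^1]f(x)\ne 0$. I would then simply take $a_{0,j}:=a_j$, $a_{i,j}:=0$ for $i\ge 1$, and $\rho_j:=0$ for all $j$: the displayed identity then collapses to the $A$-sequence relation, and $a_{0,0}=a_0\ne 0$.

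For the converse, suppose $(t_{n,k})$ is lower triangular with $t_{0,0}\ne 0$ and satisfies the displayed recurrence with $a_{0,0}\ne 0$. Specializing to $k=n$, every term on the right except the $(i,j)=(0,0)$ one has column index exceeding the row index, so $t_{n+1,n+1}=a_{0,0}t_{n,n}$; hence $t_{n,n}=a_{0,0}^{\,n}t_{0,0}\ne 0$ and the array is invertible. Write $c_k(x)=\sum_n t_{n,k}x^n$ for the column generating functions, $\alpha_j(x)=\sum_i a_{i,j}x^i$, $A(x,w)=\sum_{i,j}a_{i,j}x^iw^j=\sum_j\alpha_j(x)w^j$, and $\rho(w)=\sum_j\rho_jw^j$. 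Multiplying the recurrence by $x^{n+1}$ and summing over $n\ge 0$ (all sums being finite by lower triangularity) converts it, for every $k\ge 0$, into
\[ c_{k+1}(x)=x\sum_{j\ge 0}\alpha_j(x)\,c_{k+j}(x)+\sum_{j\ge 0}\rho_j\,c_{k+j+2}(x). \]

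I would then produce the candidate pair $(g,f)$. The equation $F=xA(x,F)+F^{2}\rho(F)$ has a unique solution $f(x)\in\mathcal{F}_1$: since $A(x,F)=a_{0,0}+O(x)$ and $F^{2}\rho(F)=O(x^{2})$, a solution must satisfy $[x^1]F=a_{0,0}\ne 0$, and comparing coefficients of $x^m$ shows $[x^m]F$ is determined by $[x^1]F,\dots,[x^{m-1}]F$ and the known constants $a_{i,j},\rho_j$. Set $g(x):=c_0(x)\in\mathcal{F}_0$ (indeed $g(0)=t_{0,0}\ne 0$). Substituting $c_k=gf^k$ into the functional identity above and cancelling $gf^k$ reduces it to $f=xA(x,f)+f^{2}\rho(f)$, which holds by construction; reversing the generating-function computation, this says that the Riordan array $(g(x),f(x))$ satisfies the \emph{same} displayed recurrence entry by entry. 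Finally I would invoke that a lower-triangular array is completely determined by its $0$-th column together with the displayed recurrence: given rows $0,\dots,n$, compute row $n+1$ from right to left, using $t_{n+1,n+1}=a_{0,0}t_{n,n}$ and, for $0\le k<n$, expressing $t_{n+1,k+1}$ through entries $t_{n-i,k+j}$ (rows $\le n$, known) and $t_{n+1,k+j+2}$ (row $n+1$, columns $>k+1$, already computed). Since $(t_{n,k})$ and $(g(x),f(x))$ are both lower triangular, obey the recurrence, and share the $0$-th column $g(x)$, they coincide, so $(t_{n,k})$ is a Riordan array.

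I expect the main obstacle to be the bookkeeping at two points. First, establishing that $F=xA(x,F)+F^{2}\rho(F)$ has a unique solution in $\mathcal{F}_1$: one must verify that every contribution to $[x^m]F$ beyond the leading $a_{0,0}x$ involves only strictly lower-order coefficients of $F$, which is where the hypothesis $a_{0,0}\ne 0$ is genuinely used. Second, arranging the row-by-row, right-to-left induction so that each entry appearing on the right-hand side of the recurrence has already been determined; this is legitimate precisely because the $\rho$-sum is shifted to columns $\ge k+2$ while the $A$-sum only reaches rows $\le n$. A minor point to dispatch along the way is the standing assumption $t_{0,0}\ne 0$: without it the recurrence forces every diagonal entry, hence every entry outside the $0$-th column, to vanish, so the array could not be a Riordan array to begin with.
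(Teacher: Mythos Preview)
The paper does not actually prove this theorem; it is quoted from the cited references and followed only by the remark that $f$ and the pair $(A,\rho)$ are linked by
\[
\frac{f(x)}{x}=\sum_{i\ge 0}x^{i}R^{(i)}(f(x))+\frac{f(x)^{2}}{x}\,\rho(f(x)).
\]
So there is no in-paper proof to compare against. Your argument is correct and is essentially the standard one from the sources the paper cites: the forward direction collapses the $A$-matrix recurrence to the ordinary $A$-sequence recurrence, and for the converse you pass to column generating functions, solve the functional equation $F=xA(x,F)+F^{2}\rho(F)$ for $f$, set $g=c_{0}$, verify that the Riordan columns $gf^{k}$ satisfy the same column identity, and then use the right-to-left induction along row $n+1$ to conclude uniqueness. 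Note that your functional equation is exactly the relation displayed in the paper, multiplied through by $x$ (with $A(x,w)=\sum_{i}x^{i}R^{(i)}(w)$), so the key object you construct matches what the paper records. Your closing remark that the hypothesis $t_{0,0}\ne 0$ is implicitly needed is also on point: without it the diagonal vanishes and the ``if'' direction would fail trivially.
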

As we have seen, the power series definition of a Riordan array is as follows. A Riordan array is defined by a pair of power series, $g(x)$ and $f(x)$, where
$$g(x)=g_0 + g_1 x + g_2 x^2+ \cdots, \quad g_0 \ne 0,$$ and
$$f(x)=f_1 x + f_2 x^2+ f_3 x^3+\cdots, \quad f_0=0 \text{ and } f_1 \ne 0.$$
We then have
$$t_{n,k}=[x^n] g(x)f(x)^k,$$ where $[x^n]$ is the functional that extracts the coefficient of $x^n$.
The relationship between $f(x)$ and the pair $(A, \rho)$ is the following.
$$\frac{f(x)}{x}=\sum_{i \ge 0} x^i R^{(i)}(f(x))+\frac{f(x)^2}{x}\rho(f(x)),$$ where
$R^{(i)}$ is the generating series of the $i$-th row of $A$, and $\rho(x)$ is the generating series of the sequence $\rho_n$.

\section{Lattice paths and Riordan arrays}
Following \cite{Rook}, we let $S \subset \mathbb{N} \times \mathbb{N}$ where $\mathbb{N}$ is the set of nonnegative integers, with $(0,0) \notin S$. An $S$-path from the point $(n_0,k_0)$ to the point $(n,k)$ is a sequence of pairs $(a_1,b_1), (a_2,b_2), \ldots,(a_r,b_r)$ such that
$$(n_0+a_1+a_2+\cdots + a_r, k_0+b_1+b_2+\cdots+b_r)=(n,k).$$ Then
$$A(x,y)=\frac{1}{1-\sum_{(a,b) \in S} x^a y^b}$$ is the generating function of the matrix $(a_{n,k})$ where
$$A(x,y)=\sum_{n,k} a_{n,k} x^n y^k,$$ and $a_{n,k}$ counts the number of $S$-paths from $(0,0)$ to $(n,k)$. The condition $(0,0) \notin S$ ensures that $a_{n,k}$ is finite.
We are interested in the case where $y$ appears in the denominator to the power of $1$ only.
\begin{proposition} Let $$A(x,y)=\frac{1}{1-\sum_{(a,b) \in S} x^a y^b}$$ with $(0,0) \notin S$.
If $$\sum_{(a,b) \in S} x^a y^b=\sum_{i=1}^r \alpha_i x^i +y x \sum_{j=0}^s \beta_j x^j, \quad \beta_0 \ne 0,$$
then the matrix $\left(a_{n,k}\right)$ is a Riordan array.\end{proposition}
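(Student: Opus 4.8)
The plan is to exhibit explicit power series $g(x)$ and $f(x)$ so that $A(x,y)$ coincides with the bivariate generating function $\mathcal{G}(g(x),f(x))=\frac{g(x)}{1-yf(x)}$ recorded earlier, and then to verify that the pair $(g,f)$ meets the membership conditions $g\in\mathcal{F}_0$, $f\in\mathcal{F}_1$ that define a Riordan array.

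First I would separate the hypothesis on $S$ into its two parts, writing $P(x)=\sum_{i=1}^r\alpha_i x^i$ for the contribution of the horizontal steps $(i,0)$ and $xQ(x)$ with $Q(x)=\sum_{j=0}^s\beta_j x^j$ for the contribution of the steps $(j+1,1)$, so the denominator of $A(x,y)$ is $1-P(x)-yxQ(x)$. Dividing numerator and denominator by $1-P(x)$ then gives
$$A(x,y)=\frac{\dfrac{1}{1-P(x)}}{1-y\,\dfrac{xQ(x)}{1-P(x)}}.$$
This is exactly $\frac{g(x)}{1-yf(x)}$ for the natural choice $g(x)=\frac{1}{1-P(x)}$ and $f(x)=\frac{xQ(x)}{1-P(x)}$; both are well-defined formal power series because $P(0)=0$ makes $1-P(x)$ invertible in the ring of power series.

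Next I would check the two normalization conditions. Since $P(0)=0$ we have $g(0)=1\ne 0$, so $g\in\mathcal{F}_0$. For $f$, the explicit factor of $x$ gives $f(0)=0$, and since $\frac{1}{1-P(x)}$ has constant term $1$ while $Q(0)=\beta_0$, the coefficient of $x$ in $f(x)$ equals $\beta_0$, which is nonzero by hypothesis; hence $f\in\mathcal{F}_1$. Therefore $(g(x),f(x))$ is a genuine Riordan array with $A(x,y)=\mathcal{G}(g(x),f(x))$, and so the matrix $(a_{n,k})$ defined by $A(x,y)=\sum_{n,k}a_{n,k}x^ny^k$ is its matrix representation, namely $a_{n,k}=[x^n]g(x)f(x)^k$.

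I do not expect a genuine obstacle here: the argument is one algebraic rearrangement followed by a check of two conditions. The only delicate point, and the only place where the hypotheses are used in an essential way, is the claim $f_1=\beta_0\ne 0$, which is precisely what forces $f\in\mathcal{F}_1$; without $\beta_0\ne 0$ the construction would not yield a Riordan array (one would instead land on a stretched or otherwise degenerate array). It is also worth remarking explicitly that restricting $y$ to degree one in the denominator is exactly what keeps the denominator affine in $y$, which is the structural reason the Riordan form $\frac{g}{1-yf}$ is available at all.
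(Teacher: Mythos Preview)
Your proof is correct and follows essentially the same route as the paper: the same algebraic rearrangement of the denominator to recognize the form $\frac{g(x)}{1-yf(x)}$ with $g(x)=\frac{1}{1-\sum_i\alpha_i x^i}$ and $f(x)=\frac{x\sum_j\beta_j x^j}{1-\sum_i\alpha_i x^i}$. If anything, you are slightly more careful than the paper in explicitly verifying the membership conditions $g\in\mathcal{F}_0$ and $f\in\mathcal{F}_1$ (in particular the role of $\beta_0\ne 0$), which the paper leaves implicit.
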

\begin{proof}
We have
\begin{align*}\frac{1}{1-\sum_{i=1}^r \alpha_i x^i -y x \sum_{j=0}^s \beta_j x^j}&=\frac{\frac{1}{1-\sum_{i=1}^r \alpha_i x^i}}{1-xy\frac{\sum_{j=0}^s \beta_j x^j}{1-\sum_{i=1}^r \alpha_i x^i}}\\
&=\mathcal{G}\left(\frac{1}{1-\sum_{i=1}^r \alpha_i x^i}, \frac{x \sum_{j=0}^s \beta_jx^j}{1-\sum_{i=1}^r \alpha_i x^i}\right).\end{align*}
\end{proof}

\begin{figure}
\begin{center}
\begin{tikzpicture}
\draw(0,0)--(6,0);
\draw(0,0)--(6,6);
\draw(1,0)--(1,1.2);
\draw(2,0)--(2,2.2);
\draw(3,0)--(3,3.2);
\draw(4,0)--(4,4.2);
\draw(5,0)--(5,5.2);
\draw(6,0)--(6,6.2);

\draw(1,1)--(6.2,1);
\draw(2,2)--(6.2,2);
\draw(3,3)--(6.2,3);
\draw(4,4)--(6.2,4);
\draw(5,5)--(6.2,5);

\draw(1,0)--(6,5);
\draw(2,0)--(6,4);
\draw(3,0)--(6,3);
\draw(4,0)--(6,2);
\draw(5,0)--(6,1);

\draw[red, thick, postaction={decorate},
      decoration={markings, mark=at position 0.5 with {\arrow{>}}}]
      (0,0) -- (1,1);

\draw[red, thick, postaction={decorate},
      decoration={markings, mark=at position 0.5 with {\arrow{>}}}]
      (1,1) -- (2,2);

\draw[red, thick, postaction={decorate},
      decoration={markings, mark=at position 0.5 with {\arrow{>}}}]
      (2,2) -- (3,3);

\draw[red, thick, postaction={decorate},
      decoration={markings, mark=at position 0.5 with {\arrow{>}}}]
      (1,0) -- (2,1);

\draw[red, thick, postaction={decorate},
      decoration={markings, mark=at position 0.5 with {\arrow{>}}}]
      (2,0) -- (3,1);

\draw[red, thick, postaction={decorate},
      decoration={markings, mark=at position 0.5 with {\arrow{>}}}]
      (2,1) -- (3,2);
      
\draw[red, thick, postaction={decorate},
      decoration={markings, mark=at position 0.5 with {\arrow{>}}}]
      (3,1) -- (4,2);

\draw[blue, thick, postaction={decorate},
      decoration={markings, mark=at position 0.5 with {\arrow{>}}}]
      (0,0) -- (1,0);

\draw[blue, thick, postaction={decorate},
      decoration={markings, mark=at position 0.5 with {\arrow{>}}}]
      (1,0) -- (2,0);

\draw[blue, thick, postaction={decorate},
      decoration={markings, mark=at position 0.5 with {\arrow{>}}}]
      (2,0) -- (3,0);

\draw[blue, thick, postaction={decorate},
      decoration={markings, mark=at position 0.5 with {\arrow{>}}}]
      (1,1) -- (2,1);

\draw[blue, thick, postaction={decorate},
      decoration={markings, mark=at position 0.5 with {\arrow{>}}}]
      (2,1) -- (3,1);

\draw[blue, thick, postaction={decorate},
      decoration={markings, mark=at position 0.5 with {\arrow{>}}}]
      (2,2) -- (3,2);

\draw[blue, thick, postaction={decorate},
      decoration={markings, mark=at position 0.5 with {\arrow{>}}}]
      (3,2) -- (4,2);
      
\draw[green, thick, postaction={decorate},
      decoration={markings, mark=at position 0.5 with {\arrow{>}}}]
      (0,0) -- (2,1);

\draw[green, thick, postaction={decorate},
      decoration={markings, mark=at position 0.5 with {\arrow{>}}}]
      (1,0) -- (3,1);

\draw[green, thick, postaction={decorate},
      decoration={markings, mark=at position 0.5 with {\arrow{>}}}]
      (1,1) -- (3,2);
      
\draw[green, thick, postaction={decorate},
      decoration={markings, mark=at position 0.5 with {\arrow{>}}}]
      (2,1) -- (4,2);

\node at (0,-.3) {1};
\node at (1,1.3) {1};
\node at (2,2.3) {1};
\node at (3,3.3) {1};
\node at (4,4.3) {1};
\node at (5,5.3) {1};
\node at (6,6.3) {1};

\node at (1,-.3) {1};
\node at (2,-.3) {1};
\node at (3,-.3) {1};
\node at (4,-.3) {1};
\node at (5,-.3) {1};

\node at (2,1.3) {3};
\node at (3,1.3) {5};

\node at (3,2.3) {5};

\node at (4,1.3) {7};
\node at (4,2.3) {13};
\node at (4,3.3) {7};

\node at (5,1.3) {9};
\node at (5,2.3) {25};
\node at (5,3.3) {25};
\node at (5,4.3) {9};
\end{tikzpicture}
\end{center}
\caption{The Delannoy triangle $\left(\frac{1}{1-x},\frac{x(1+x)}{1-x}\right)$ as a path matrix}\label{Del}
\end{figure}
\begin{example} We consider the Delannoy triangle $\left(\frac{1}{1-x},\frac{x(1+x)}{1-x}\right)$, which has its generating function given by
$$\frac{1}{1-x-xy-x^2y}.$$ We conclude that this counts paths from $(0,0)$ to $(n,k)$ with steps $\{(1,0), (1,1), (2,1)\}$. See Figure \ref{Del}.

\end{example}
\begin{example}
We consider the Riordan array $(g(x), f(x))$ given by
$$\left(\frac{1}{1-x-x^2}, \frac{x(1+x)}{1-x-x^2}\right).$$
We have
$$\mathcal{G}(g(x),f(x))=\frac{1}{1-x-x^2-xy-x^2y}.$$
Hence this Riordan array enumerates ``left-factors'' of lattice paths with step set
$$S=\{(1,0), (2,0), (1,1), (2,1)\}.$$
This matrix begins
$$\left(
\begin{array}{ccccccc}
 1 & 0 & 0 & 0 & 0 & 0 & 0 \\
 1 & 1 & 0 & 0 & 0 & 0 & 0 \\
 2 & 3 & 1 & 0 & 0 & 0 & 0 \\
 3 & 7 & 5 & 1 & 0 & 0 & 0 \\
 5 & 15 & 16 & 7 & 1 & 0 & 0 \\
 8 & 30 & 43 & 29 & 9 & 1 & 0 \\
13 & 58 & 104 & 95 & 46 & 11 & 1 \\
\end{array}
\right).$$ The left-factors are the paths that go from $(0,0)$ to $(n,k)$ for general $(n,k)$.
The row sums of this matrix begin
$$1, 2, 6, 16, 44, 120, 328, 896, 2448, 6688, 18272, \ldots,$$ with generating function
$$\frac{1}{1-2x-2x^2}.$$ This sequence \seqnum{A002605}, with general term $\sum_{k=0}^{\lfloor \frac{n}{2} \rfloor} \binom{n-k}{k}2^{n-k}$,  thus counts left factors of $S$-paths.
The rectification of this array will have generating function
$$\frac{1}{1-x-y-x^2-xy}.$$
This square matrix thus enumerates paths in the positive quadrant from $(0,0)$ to $(n,k)$ with step set
$$\{(1,0),(0,1),(2,0),(1,1)\}.$$
The reversal of the Riordan array $\left(\frac{1}{1-x-x^2}, \frac{x(1+x)}{1-x-x^2}\right)$ begins
$$\left(
\begin{array}{ccccccc}
 1 & 0 & 0 & 0 & 0 & 0 & 0 \\
 1 & 1 & 0 & 0 & 0 & 0 & 0 \\
 1 & 3 & 2 & 0 & 0 & 0 & 0 \\
 1 & 5 & 7 & 3 & 0 & 0 & 0 \\
 1 & 7 & 16 & 15 & 5 & 0 & 0 \\
 1 & 9 & 29 & 43 & 30 & 8 & 0 \\
 1 & 11 & 46 & 95 & 104 & 58 & 13 \\
\end{array}
\right).$$ This matrix has generating function
$$\frac{1}{1-x-xy-x^2y-x^2y^2}.$$
It enumerates paths from $(0,0)$ to $(n,k)$ with steps from the set $\{(1,0),(1,1),(2,1),(2,2)\}$.
\end{example}
Note that in the foregoing, the coefficients $\alpha_i$ and $\beta_j$ have values of $\pm 1$, indicating membership or not of $S$. However, by allowing each step type to have different colors (or weights), we can allow for $\alpha_i, \beta_j \in \mathbb{N}$.
\begin{example} \textbf{The Delannoy triangle and square}. We have seen that the Delannoy triangle has its generating function given by
$$\frac{1}{1-x-xy-x^2y}.$$
This triangle counts lattice paths from $(0,0)$ to $(n,k)$ with step set $\{(1,0),(1,1),(2,1)\}$. These paths do not go above the diagonal $y=x$. The generating function of the Delannoy square is
$$\frac{1}{1-x-y-xy},$$ and hence this square matrix counts lattice paths in the positive quadrant from $(0,0)$ to $(n,k)$ with step set $\{(1,0),(0,1),(1,1)\}$. The Delannoy square begins
$$\left(
\begin{array}{ccccccc}
 1 & 1 & 1 & 1 & 1 & 1 & 1 \\
 1 & 3 & 5 & 7 & 9 & 11 & 13 \\
 1 & 5 & 13 & 25 & 41 & 61 & 85 \\
 1 & 7 & 25 & 63 & 129 & 231 & 377 \\
 1 & 9 & 41 & 129 & 321 & 681 & 1289 \\
 1 & 11 & 61 & 231 & 681 & 1683 & 3653 \\
 1 & 13 & 85 & 377 & 1289 & 3653 & 8989 \\
\end{array}
\right).$$ A feature of this triangle is that it is the \emph{reverse symmetrization} of the Riordan array
$$\left(\frac{1}{\sqrt{1-6x+x^2}}, \frac{1-x-\sqrt{1-6x+x^2}}{2}\right),$$ which is the Riordan array that begins
$$\left(
\begin{array}{ccccccc}
 1 & 0 & 0 & 0 & 0 & 0 & 0 \\
 3 & 1 & 0 & 0 & 0 & 0 & 0 \\
 13 & 5 & 1 & 0 & 0 & 0 & 0 \\
 63 & 25 & 7 & 1 & 0 & 0 & 0 \\
 321 & 129 & 41 & 9 & 1 & 0 & 0 \\
 1683 & 681 & 231 & 61 & 11 & 1 & 0 \\
 8989 & 3653 & 1289 & 377 & 85 & 13 & 1 \\
\end{array}
\right)$$
Denoting this matrix by $M$, the Delannoy square is given by
$$M^R+(M^R)^T-diag(1,3,13,63,\ldots),$$ were $M^R$ denotes the reversal of $M$. We subtract $diag(1,3,13,\ldots)$ as otherwise the diagonal would be duplicated.

The Delannoy square corresponds to the step set $\{(1,0),(0,1),(1,1)\}$. It we extend this step set to $\{(1,0),(0,1),(1,1),(2,2)\}$ we get the square matrix defined by
$$\frac{1}{1-x-y-xy-x^2y^2}.$$ This matrix begins
$$\left(
\begin{array}{ccccccc}
 1 & 1 & 1 & 1 & 1 & 1 & 1 \\
 1 & 3 & 5 & 7 & 9 & 11 & 13 \\
 1 & 5 & 14 & 27 & 44 & 65 & 90 \\
 1 & 7 & 27 & 71 & 147 & 263 & 427 \\
 1 & 9 & 44 & 147 & 379 & 816 & 1550 \\
 1 & 11 & 65 & 263 & 816 & 2082 & 4595 \\
 1 & 13 & 90 & 427 & 1550 & 4595 & 11651 \\
\end{array}
\right).$$
This is the symmetrization of the Riordan array
$$\left(\frac{1}{\sqrt{1-6x-x^2+2x^3+x^4}}, \frac{1-x-x^2-\sqrt{1-6x-x^2+2x^3+x^4}}{2}\right).$$ The first column of this array is \seqnum{A191649}.
When we read the above square matrix by diagonals, we get the Pascal-like matrix that has the generating function
$$\frac{1}{1-x-xy-x^2y-x^4y^2},$$ which counts paths from $(0,0)$ to $(n,k)$ with step set $\{(1,0),(1,1),(2,1),(4,2)\}$. This matrix begins
$$\left(
\begin{array}{ccccccc}
 1 & 0 & 0 & 0 & 0 & 0 & 0 \\
 1 & 1 & 0 & 0 & 0 & 0 & 0 \\
 1 & 3 & 1 & 0 & 0 & 0 & 0 \\
 1 & 5 & 5 & 1 & 0 & 0 & 0 \\
 1 & 7 & 14 & 7 & 1 & 0 & 0 \\
 1 & 9 & 27 & 27 & 9 & 1 & 0 \\
 1 &11 & 44 & 71 & 44 & 11 & 1 \\
\end{array}
\right).$$
This is not a Riordan array. The row sums
$$1, 2, 5, 12, 30, 74, 183, 452, 1117, 2760, 6820,\ldots,$$ have generating function
$$\frac{1}{1-2x-x^2-x^4},$$ and they count the left factors of the paths with step set  $\{(1,0),(1,1),(2,1),(4,2)\}$.
\end{example}

\begin{example} The (vertically) stretched Delannoy triangle $\left(\frac{1}{1-x}, \frac{x^2(1+x)}{1-x}\right)$ has generating function given by
$$\frac{\frac{1}{1-x}}{1-y\frac{x^2(1+x)}{1-x}}=\frac{1}{1-x-x^2y-x^3y}.$$
Thus this stretched triangle, which begins
$$\left(
\begin{array}{ccccccc}
 1 & 0 & 0 & 0 & 0 & 0 & 0 \\
 1 & 0 & 0 & 0 & 0 & 0 & 0 \\
 1 & 1 & 0 & 0 & 0 & 0 & 0 \\
 1 & 3 & 0 & 0 & 0 & 0 & 0 \\
 1 & 5 & 1 & 0 & 0 & 0 & 0 \\
 1 & 7 & 5 & 0 & 0 & 0 & 0 \\
 1 & 9 & 13 & 1 & 0 & 0 & 0 \\
\end{array}
\right),$$  counts the left factors for the step set $\{(1,0), (2,1), (3,1)\}$. The total number of left factors is given by the row sums of this matrix. We see that these are the tribonacci numbers \seqnum{A000073}
$$1, 1, 2, 4, 7, 13, 24, 44, 81, 149, 274,\ldots,$$ with generating function $\frac{1}{1-x-x^2-x^3}$.
The reversal of this stretched triangle, which begins,
$$\left(
\begin{array}{ccccccc}
 1 & 0 & 0 & 0 & 0 & 0 & 0 \\
 0 & 1 & 0 & 0 & 0 & 0 & 0 \\
 0 & 1 & 1 & 0 & 0 & 0 & 0 \\
 0 & 0 & 3 & 1 & 0 & 0 & 0 \\
 0 & 0 & 1 & 5 & 1 & 0 & 0 \\
 0 & 0 & 0 & 5 & 7 & 1 & 0 \\
 0 & 0 & 0 & 1 & 13 & 9 & 1 \\
\end{array}
\right),$$ has generating function
$$\frac{1}{1-xy-x^2y-x^3y^2}.$$
This triangle enumerates paths from $(0,0)$ to $(n,k)$ with steps in the set $\{(1,1), (2,1), (3,2)\}$.
\end{example}

\section{Triangulating rectified Riordan arrays}
In this section, we give a method for producing lower-triangular matrices from suitable square matrices.
For this, we let $\mathbf{B}$ denote the binomial matrix $\left(\binom{n}{k}\right)$. We furthermore let $\mathbf{B}(a)=\mathbf{B}_a$ denote the $a$-fold product of $\mathbf{B}$, that is, $\mathbf{B}^a$. This is the matrix
$\left(\binom{n}{k}a^{n-k}\right)$. As a Riordan array, it is given by
$$\left(\frac{1}{1-ax}, \frac{x}{1-ax}\right).$$
We have the following proposition.
\begin{proposition} Let $(g(x), f(x))$ be a Riordan array, and $\left(g(x), \frac{f(x)}{x}\right)$ be its rectification. If $f_2 \ne 0$, then the matrix
$$\left(g(x), \frac{f(x)}{x}\right) \cdot \left(\mathbf{B}_{f_1}^{-1}\right)^T$$ is a Riordan array.
\end{proposition}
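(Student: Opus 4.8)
The plan is to compute the bivariate generating function of the product and to recognize it as that of an ordinary Riordan array. Recall that the rectification $\left(g(x),\frac{f(x)}{x}\right)$ is by definition the square matrix with bivariate generating function
$$A(x,y)=\frac{g(x)}{1-y\,\frac{f(x)}{x}},$$
and that, as a Riordan array, $\mathbf{B}_{f_1}^{-1}=\mathbf{B}_{-f_1}=\left(\frac{1}{1+f_1 x},\,\frac{x}{1+f_1 x}\right)$. The bookkeeping fact I would invoke is that right multiplication of a matrix $N$ with bivariate generating function $N(x,y)=\sum_{n,k}N_{n,k}x^n y^k$ by the transpose $L^T$ of a Riordan array $L=(\phi(x),\psi(x))$ produces the matrix with bivariate generating function $\phi(y)\,N(x,\psi(y))$. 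Indeed $(NL^T)_{n,k}=\sum_j N_{n,j}(L^T)_{j,k}=\sum_j N_{n,j}\,[t^k]\,\phi(t)\psi(t)^j$, and summing on $k$ first gives $\sum_{n,j}N_{n,j}x^n\phi(y)\psi(y)^j=\phi(y)N(x,\psi(y))$. Note that since $L^T$ is upper triangular this product is well defined even though $N$ is a full square matrix: each entry of $NL^T$ is a finite sum.

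Applying this with $N$ the rectification and $L=\mathbf{B}_{f_1}^{-1}$, the product $\left(g(x),\frac{f(x)}{x}\right)\cdot\left(\mathbf{B}_{f_1}^{-1}\right)^T$ has bivariate generating function
$$\frac{1}{1+f_1 y}\,A\!\left(x,\frac{y}{1+f_1 y}\right)=\frac{1}{1+f_1 y}\cdot\frac{g(x)}{1-\frac{y}{1+f_1 y}\cdot\frac{f(x)}{x}}=\frac{g(x)}{1+f_1 y-y\,\frac{f(x)}{x}}=\frac{g(x)}{1-y\left(\frac{f(x)}{x}-f_1\right)}.$$
Since $f(x)=f_1 x+f_2 x^2+f_3 x^3+\cdots$, we have $\frac{f(x)}{x}-f_1=\frac{f(x)-f_1 x}{x}=f_2 x+f_3 x^2+\cdots$; call this series $\widetilde f(x)$. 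The displayed generating function is then $\frac{g(x)}{1-y\,\widetilde f(x)}=\mathcal{G}\!\left(g(x),\widetilde f(x)\right)$, and because $g(x)\in\mathcal{F}_0$ and, by the hypothesis $f_2\ne 0$, $\widetilde f(x)\in\mathcal{F}_1$, the product is exactly the Riordan array $\left(g(x),\frac{f(x)-f_1 x}{x}\right)$, which is the assertion.

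The only place the hypothesis is used — and the only step requiring a moment's thought — is this last one: multiplication by $\left(\mathbf{B}_{f_1}^{-1}\right)^T$ precisely strips the linear term $f_1 x$ out of $f$, and the resulting second component $\widetilde f(x)=f_2 x+f_3 x^2+\cdots$ is an admissible Riordan function (i.e.\ lies in $\mathcal{F}_1$) exactly when $f_2\ne 0$; if $f_2=0$ the output would have order $\ge 2$ and fail to be an ordinary Riordan array. Everything preceding it is the transpose-multiplication identity together with a one-line rational-function simplification, so I would either cite that identity as known or dispatch it with the two-line interchange of summation indicated above.
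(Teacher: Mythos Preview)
Your proof is correct and follows essentially the same route as the paper: compute the bivariate generating function of the product via the fundamental theorem of Riordan arrays applied in the $y$-variable, simplify to $\frac{g(x)}{1-y(f_2 x+f_3 x^2+\cdots)}$, and recognize this as $\mathcal{G}\!\left(g(x),\frac{f(x)-f_1 x}{x}\right)$. You add two details the paper leaves implicit—namely a derivation of the transpose-multiplication identity and an explicit remark that $f_2\ne 0$ is exactly what forces $\widetilde f\in\mathcal{F}_1$—but the argument is the same.
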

\begin{proof}
The square matrix $\left(g(x), \frac{f(x)}{x}\right)$ has generating function
$$\frac{g(x)}{1-y\frac{f(x)}{x}}.$$
The matrix $\mathbf{B}_{f_1}^{-1}$ has generating function $$\left(\frac{1}{1+f_1y}, \frac{y}{1+f_1y}\right).$$
Multiplying on the right by the transpose indicates that we are operating on the $y$-variable. As $\mathbf{B}$ is a Riordan array, we can use the fundamental theorem of Riordan arrays to conclude that the generating function of the product is given by
$$\frac{1}{1+f_1 y} \frac{g(x)}{1-\frac{y}{1+f_1y}\frac{f(x)}{x}}=\frac{g(x)}{1+f_1y-y \frac{f(x)}{x}}.$$
Now $f(x)=f_1x+f_2x^2+\cdots$, so we obtain
$$\frac{g(x)}{1+f_1y-y(f_1+f_2x+\cdots)}=\frac{g(x)}{1-y(f_2 x + \cdots)}.$$
We have $f_2 x+\cdots = \frac{f(x)-f_1x}{x}$, and so we can conclude that the product $\left(g(x), \frac{f(x)}{x}\right) \cdot \left(\mathbf{B}_{f_1}^{-1}\right)^T$ is given by the Riordan array
$$\left(g(x), \frac{f(x)-f_1x}{x}\right)=(g(x), f_2x+\cdots).$$
\end{proof}
\begin{corollary} We have
$$\left(g(x), \frac{f(x)-f_1x}{x}\right)=(g(x),f(x))\cdot \left(1, \frac{x}{\bar{f}(x)}-f_1\right).$$
\end{corollary}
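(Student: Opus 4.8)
The plan is to prove the identity directly from the fundamental product rule for Riordan arrays, without going back through the transpose manipulation used in the proof of the proposition. First I would check that the right-hand factor $\left(1, \frac{x}{\bar f(x)}-f_1\right)$ is a bona fide element of the Riordan group. Since $\bar f(x)=\frac{x}{f_1}+O(x^2)$, the quotient $\frac{x}{\bar f(x)}$ is a well-defined power series with constant term $f_1$; expanding one step further shows
\[
\frac{x}{\bar f(x)}-f_1=\frac{f_2}{f_1}x+\cdots,
\]
so this series lies in $\mathcal{F}_1$ precisely when $f_2\neq 0$, which is the standing hypothesis of the preceding proposition.

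Next I would apply the product rule $(g(x),f(x))\cdot(u(x),v(x))=(g(x)u(f(x)),v(f(x)))$ with $u(x)=1$ and $v(x)=\frac{x}{\bar f(x)}-f_1$. The first component is simply $g(x)\cdot 1=g(x)$. For the second component I substitute $f(x)$ for $x$ and use the defining property $\bar f(f(x))=x$ of the compositional inverse:
\[
v(f(x))=\frac{f(x)}{\bar f(f(x))}-f_1=\frac{f(x)}{x}-f_1=\frac{f(x)-f_1x}{x}.
\]
Combining the two components yields $\left(g(x),\frac{f(x)-f_1x}{x}\right)$, which is exactly the claimed identity.

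I do not anticipate any real obstacle: the argument is essentially a one-line application of the product rule, and the only point requiring care is the bookkeeping for $\bar f$ needed to see that both $\frac{x}{\bar f(x)}-f_1$ and the resulting series $\frac{f(x)-f_1x}{x}=f_2x+f_3x^2+\cdots$ begin at order $x$, which again comes down to $f_2\neq 0$ and is consistent with the conclusion of the proposition. As a sanity check one could also observe that combining this corollary with the proposition forces the matrix equality $\left(g(x),\frac{f(x)}{x}\right)\cdot\left(\mathbf{B}_{f_1}^{-1}\right)^T=(g(x),f(x))\cdot\left(1,\frac{x}{\bar f(x)}-f_1\right)$, giving two descriptions of the same triangular array.
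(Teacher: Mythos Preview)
Your proof is correct and uses essentially the same idea as the paper: both arguments reduce the identity to a one-line application of the Riordan product rule together with the defining relation between $f$ and $\bar f$. The only cosmetic difference is direction---the paper left-multiplies both sides by $(g,f)^{-1}$ and simplifies using $f(\bar f(x))=x$, whereas you compute the right-hand product directly using $\bar f(f(x))=x$; your route is, if anything, a step shorter.
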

\begin{proof}
We have
\begin{align*}
(g,f)^{-1}\cdot \left(g(x), \frac{f(x)-f_1x}{x}\right)&=\left(\frac{1}{g(\bar{f})}, \bar{f}\right)\cdot \left(g(x), \frac{f(x)-f_1x}{x}\right)\\
&=\left(\frac{1}{g(\bar{f})} g(\bar{f}), \frac{f(\bar{f}(x))-f_1 \bar{f}(x)}{\bar{f}(x)}\right)\\
&=\left(1, \frac{x}{\bar{f}(x)}-f_1\right).\end{align*}
Multiplying both sides by $(g(x), f(x))$ now gives us
$$\left(g(x), \frac{f(x)-f_1x}{x}\right)=(g(x),f(x))\cdot \left(1, \frac{x}{\bar{f}(x)}-f_1\right).$$
\end{proof}
Note that this gives us
$$\left(g(x), \frac{f(x)-f_1x}{x}\right)=(g(x), f(x))\cdot (1, A(x)-f_1)$$ where $A(x)$ is the generating function of the $A$-sequence of $(g(x), f(x))$.
In the special case of a Bell matrix $(g(x), xg(x))$ this process has a special form. For simplicity, we assume that $g_0=1$. Rectifying $(g(x),xg(x))$ leads to $(g(x),g(x))$ with generating function
$$\frac{g(x)}{1-yg(x)}.$$ Applying the inverse binomial transform in $y$ gives us the generating function
$$\frac{1}{1+y} \frac{g(x)}{1-\frac{y}{1+y} g(x)}=\frac{g(x)}{1+y-yg(x)}=\mathcal{G}(g(x),g(x)-1).$$ This coincides with the notion of ``Riordan Square'' introduced by Peter Luschny \cite{Luschny}.  We can iterate this process, to get $\mathcal{G}\left(g(x), \frac{g(x)-1-x}{x}\right)$.
\begin{example} We consider the Riordan array $(g(x), f(x))=\left(\frac{1}{1-x-x^2}, \frac{x(1+x)}{1-x-x^2}\right)$. We have
$$f(x)=x+2x^2+3x^3+5x^4+\cdots,$$ so $f_2=2 \ne 0$. Also, $f_1=1$ so $\mathbf{B}_{f_1}=\mathbf{B}$. We conclude that the array $\left(g(x), \frac{f(x)}{x}\right) \cdot \left(\mathbf{B}_{f_1}^{-1}\right)^T$ is given by the Riordan array $$\left(\frac{1}{1-x-x^2}, \frac{x(2+x)}{1-x-x^2}\right).$$ This matrix begins as follows.
$$\left(
\begin{array}{ccccccc}
 1 & 0 & 0 & 0 & 0 & 0 & 0 \\
 1 & 2 & 0 & 0 & 0 & 0 & 0 \\
 2 & 5 & 4 & 0 & 0 & 0 & 0 \\
 3 & 12 & 16 & 8 & 0 & 0 & 0 \\
 5 & 25 & 49 & 44 & 16 & 0 & 0 \\
 8 & 50 & 127 & 166 & 112 & 32 & 0 \\
13 & 96 & 301 & 513 & 504 & 272 & 64 \\
\end{array}
\right).$$
The generating function of this array is
$$\frac{1}{1-x-x^2-2xy-x^2y}.$$ It therefore counts paths from $(0,0)$ to $(n,k)$ given steps $(1,0), (2,0), (2,1)$ and two kinds of step $(1,1)$. We denote such a step set by
$$\{(1,0), (2,0), 2*(1,1), (2,1)\}.$$ The row sums of this matrix count the left factors of such paths. We obtain the sequence \seqnum{A007482}
$$1, 3, 11, 39, 139, 495, 1763, 6279, 22363, 79647, 283667,\ldots$$ with generating function
$$\frac{1}{1-3x-2x^2}.$$  We recall that the original array
$\left(\frac{1}{1-x-x^2}, \frac{x(1+x)}{1-x-x^2}\right)$ corresponds to the step set $$\{(1,0), (2,0), (1,1), (2,1)\}.$$
We have seen that the left factors corresponding to this original array are counted by the sequence with generating function $$\gamma(x)=\frac{1}{1-2x-2x^2}.$$ We observe that the INVERT transform $\frac{g(x)}{1-xg(x)}$ of $g(x)$ is given by
$$\frac{\gamma(x)}{1-x \gamma(x)}=\frac{1}{1-3x-2x^2}.$$
\end{example}
\begin{example} We consider the Pascal-like matrix $\left(\frac{1}{1-x}, \frac{x(1+rx)}{1-x}\right)$. This has generating function
$$\frac{1}{1-x-xy-rx^2y},$$ corresponding to the step set
$$\{(1,0), (1,1), r*(2,1)\}$$ After rectifying and triangulating, we obtain the matrix
$$\left(\frac{1}{1-x}, \frac{(r+1)x}{1-x}\right).$$ This array has generating function
$$\frac{1}{1-x-(r+1)xy},$$ which corresponds to the step set
$$\{(1,0), (r+1)*(1,1)\}.$$ For $r=1$ (the Delannoy case), we obtain the matrix that begins
$$\left(
\begin{array}{ccccccc}
 1 & 0 & 0 & 0 & 0 & 0 & 0 \\
 1 & 2 & 0 & 0 & 0 & 0 & 0 \\
 1 & 4 & 4 & 0 & 0 & 0 & 0 \\
 1 & 6 & 12 & 8 & 0 & 0 & 0 \\
 1 & 8 & 24 & 32 & 16 & 0 & 0 \\
 1 & 10 & 40 & 80 & 80 & 32 & 0 \\
 1 & 12 & 60 & 160 & 240 & 192 & 64 \\
\end{array}
\right).$$ The row sums of this array are given by $3^n$, thus there are $3^n$ left factors (of length $n$) for the step set $\{(1,0), 2*(1,1)\}$. We note that the diagonal sums of this array are given by the positive Jacobsthal numbers \seqnum{A001045}. We can deduce from this that the positive Jacobsthal numbers count left factors for the step set $\{(1,0), 2*(2,1)\}$.
\end{example}

\section{Dyck paths, Motzkin paths and Catalan matrices}
A Dyck path is a path with step set $\{(1,1), (1,-1)\}$. The negative $y$-coordinate means that the analysis of these paths is different from the foregoing, involving the reversion of power series \cite{LI}, and, where Riordan arrays are concerned, the $A$-matrix formulation is often the most appropriate.
\begin{example}
The Riordan array $(c(x), xc(x))$, where $c(x)=\frac{1-\sqrt{1-4x}}{2x}$ is the generating function of the Catalan numbers $C_n=\frac{1}{n+1}\binom{2n}{n}$ \seqnum{A000108} begins
$$\left(
\begin{array}{ccccccc}
 1 & 0 & 0 & 0 & 0 & 0 & 0 \\
 1 & 1 & 0 & 0 & 0 & 0 & 0 \\
 2 & 2 & 1 & 0 & 0 & 0 & 0 \\
 5 & 5 & 3 & 1 & 0 & 0 & 0 \\
 14 & 14 & 9 & 4 & 1 & 0 & 0 \\
 42 & 42 & 28 & 14 & 5 & 1 & 0 \\
 132 & 132 & 90 & 48 & 20 & 6 & 1 \\
\end{array}
\right).$$ This triangle \seqnum{A033184} counts the number of Dyck paths of semi-length $n$ and having first peak at height $k$. It also counts the number of Dyck paths of semi-length $n$ and having $k$ returns to the $x$-axis. The triangulation of the corresponding square matrix is the Riordan array $(c(x), xc(x)^2)$ \seqnum{A039599}, which begins
$$\left(
\begin{array}{ccccccc}
 1 & 0 & 0 & 0 & 0 & 0 & 0 \\
 1 & 1 & 0 & 0 & 0 & 0 & 0 \\
 2 & 3 & 1 & 0 & 0 & 0 & 0 \\
 5 & 9 & 5 & 1 & 0 & 0 & 0 \\
 14 & 28 & 20 & 7 & 1 & 0 & 0 \\
 42 & 90 & 75 & 35 & 9 & 1 & 0 \\
 132 & 297 & 275 & 154 & 54 & 11 & 1 \\
\end{array}
\right).$$ This counts the number of grand Dyck paths of semi-length $n$ which have $k$ \emph{downward} returns to the $x$-axis. It also counts the number of Motzkin paths from $(0,0)$ to $(n,k)$, when for levels above the $x$-axis, the level steps $(1,0)$ can be of two colors.

We can iterate this process once more, to get the Riordan array $(c(x), c(x)^2-1)$ which begins
$$\left(
\begin{array}{ccccccc}
 1 & 0 & 0 & 0 & 0 & 0 & 0 \\
 1 & 2 & 0 & 0 & 0 & 0 & 0 \\
 2 & 7 & 4 & 0 & 0 & 0 & 0 \\
 5 & 23 & 24 & 8 & 0 & 0 & 0 \\
 14 & 76 & 109 & 68 & 16 & 0 & 0 \\
 42 & 255 & 449 & 394 & 176 & 32 & 0 \\
 132 & 869 & 1770 & 1947 & 1240 & 432 & 64 \\
\end{array}
\right).$$
This array has row sums
$$1, 3, 13, 60, 283, 1348, 6454, 30992, 149091, 718044, 3460818,\ldots,$$ with generating function
$$\frac{c(x)}{2-c(x)^2}=\frac{1-2x+(1+2x)\sqrt{1-4x^2}}{2(1-4x-4x^2)}.$$
An interesting feature of this sequence is that its Hankel transform \cite{Layman} is the sequence \seqnum{A001353}
$$1, 4, 15, 56, 209, 780, 2911, 10864, 40545, 151316, 564719,\ldots,$$ with generating function
$\frac{1}{1-4x+x^2}$.

\end{example}
\begin{example}
When considering the triangulation of a square matrix derived from a Riordan array $(g(x), f(x))$, we stipulated that $f_2 \ne 0$. In the following example, we look at a case where $f_2=0$.
Thus we consider the Riordan array
$$\left(\frac{1}{1+x^2}, \frac{x}{1+x^2}\right)^{-1}=\left(c(x^2),xc(x^2)\right)$$ which begins
$$\left(
\begin{array}{ccccccc}
 1 & 0 & 0 & 0 & 0 & 0 & 0 \\
 0 & 1 & 0 & 0 & 0 & 0 & 0 \\
 1 & 0 & 1 & 0 & 0 & 0 & 0 \\
 0 & 2 & 0 & 1 & 0 & 0 & 0 \\
 2 & 0 & 3 & 0 & 1 & 0 & 0 \\
 0 & 5 & 0 & 4 & 0 & 1 & 0 \\
 5 & 0 & 9 & 0 & 5 & 0 & 1 \\
\end{array}
\right).$$ This matrix counts Dyck paths from $(0,0)$ to $(n,k)$. Applying $\left(\mathbf{B}^{-1}\right)^T$ to the corresponding square matrix $(c(x^2), c(x^2))$, we obtain, not a lower-triangular matrix, but the matrix that begins
$$\left(
\begin{array}{ccccccc}
 1 & 0 & 0 & 0 & 0 & 0 & 0 \\
 0 & 0 & 0 & 0 & 0 & 0 & 0 \\
 1 & 1 & 0 & 0 & 0 & 0 & 0 \\
 0 & 0 & 0 & 0 & 0 & 0 & 0 \\
 2 & 3 & 1 & 0 & 0 & 0 & 0 \\
 0 & 0 & 0 & 0 & 0 & 0 & 0 \\
 5 & 9 & 5 & 1 & 0 & 0 & 0 \\
\end{array}
\right),$$ with generating function
$$\frac{c(x^2)}{1-y(c(x^2)-1)}.$$
This is a vertically aerated version of the Catalan matrix $(c(x), xc(x)^2)=(c(x),c(x)-1)$.
\end{example}
\begin{example} \textbf{Motzkin paths} A Motzkin path is a path from $(0,0)$ to $(n,0)$ using steps $(1,0), (1,1)$ and $(1,-1)$. The number of such numbers is given by the Motzkin numbers
$$1, 1, 2, 4, 9, 21, 51, 127, 323, 835, 2188, 5798, 15511, 41835, \ldots,$$ with generating function
$$M(x)=\frac{1-x-\sqrt{1-2x-3x^2}}{2x^2}.$$ The number of such paths from $(0,0)$ to $(n,k)$ is given by the Bell matrix $(M(x), xM(x))=\left(\frac{1}{1+x+x^2}, \frac{x}{1+x+x^2}\right)^{-1}$ \seqnum{A064189}. As a Bell matrix, we find that the triangulation of the corresponding square matrix is given by $(M(x), M(x)-1)$ \seqnum{A321621}. We note that we have
$$(M(x), M(x)-1)^{-1}=\left(\frac{1}{1+x}, \frac{xc(-x)}{1+x}\right).$$ The generating function of the matrix $(M(x), M(x)-1)$ may be represented as a Jacobi continued fraction as
$$\cfrac{1}{1-(y+1)x-\cfrac{(y+1)x^2}{1-x-\cfrac{x^2}{1-x-\cfrac{x^2}{1-x-\cdots}}}}.$$ We have
$$(M(x), M(x)-1) = (M(x), xM(x))\cdot (1, x(1+x)).$$ The row sums of the matrix $(1,x(1+x))$ are the positive Fibonacci numbers, and so the row sums \seqnum{A111961} of $(M(x), M(x)-1)$, which begin
$$1, 2, 6, 18, 56, 176, 558, 1778, 5686, 18230, 58558,\ldots,$$
are the image of the positive Fibonacci numbers by the Motzkin matrix $(M(x), xM(x))$.
\end{example}
Our final example of this section starts with a lattice path Riordan array that is not a Bell matrix.
\begin{example} The Riordan array
$$\tilde{M}=\left(\frac{1+x}{1+3x+x^2}, \frac{x}{1+3x+x^2}\right)^{-1}=\left(\frac{1-x-\sqrt{1-6x+5x^2}}{2x(1-x)}, \frac{1-3x-\sqrt{1-6x+5x^2}}{2x}\right)$$ counts Motzkin paths from $(0,0)$ to $(n,k)$, where the level paths $(1,0)$ for levels above the ground level have $3$ colors. This matrix begins
$$\left(
\begin{array}{ccccccc}
 1 & 0 & 0 & 0 & 0 & 0 & 0 \\
 2 & 1 & 0 & 0 & 0 & 0 & 0 \\
 5 & 5 & 1 & 0 & 0 & 0 & 0 \\
 15 & 21 & 8 & 1 & 0 & 0 & 0 \\
 51 & 86 & 46 &11 & 1 & 0 & 0 \\
188 & 355 & 235 & 80 & 14 & 1 & 0 \\
731 & 1488 & 1140 & 489 & 123 & 17 & 1 \\
\end{array}
\right).$$ Then the triangulation of the rectified matrix is given by the Riordan array
$$T=\left(\frac{1-x-\sqrt{1-6x+5x^2}}{2x(1-x)}, \frac{1-3x-2x^2-\sqrt{1-6x+5x^2}}{2x^2}\right).$$ This array begins
$$\left(
\begin{array}{ccccccc}
 1 & 0 & 0 & 0 & 0 & 0 & 0 \\
 2 & 3 & 0 & 0 & 0 & 0 & 0 \\
 5 & 16 & 9 & 0 & 0 & 0 & 0 \\
 15 & 71 & 78 & 27 & 0 & 0 & 0 \\
 51 & 304 & 481 & 324 & 81 & 0 & 0 \\
188 & 1300 & 2609 & 2547 & 1242 & 243 & 0 \\
731 & 5604 & 13317 & 16678 & 11853 & 4536 & 729 \\
\end{array}
\right).$$ It has row sums
$$1, 5, 30, 191, 1241, 8129, 53448, 352097, 2321962, 15322025, 101143706,\ldots$$
whose generating function is given by
$$\frac{2x}{(1+2)\sqrt{1-6x+5x^2}-1+3x-2x^2}.$$
The Hankel transform of this sequence is
$$1, 5, 24, 115, 551, 2640, 12649, 60605, 290376, 1391275,\ldots,$$
with generating function
$$\frac{1}{1-5x+x^2}.$$
We have $$T=\tilde{M} \cdot (1, x(3+x)),$$ and hence the row sums of $T$ are the transform by $\tilde{M}$ of the sequence \seqnum{A006190}
$$1, 3, 10, 33, 109, 360, 1189, 3927, 12970, 42837, 141481,\ldots$$ with generating function
$$\frac{1}{1-3x-x^2}.$$
\end{example}

\section{Almost Riordan arrays}
An \emph{almost Riordan array of first order} \cite{almost} is a lower-triangular matrix $(a_{n,k})_{n,k \ge 0}$ such that the matrix $(a_{n,k})_{n,k \ge 1}$ is a Riordan array $(g(x), f(x))$.
For such a matrix, if the generating function of the first column $a_{n,0}$ is $a(x)$, we use the notation $(a(x), g(x), f(x))$ or $(a(x); g(x), f(x))$. The generating function of such a matrix is given by
$$a(x)+xy\frac{g(x)}{1-yf(x)}.$$
We give two examples of such matrices.
\begin{example}  We consider the set of lattice paths from $(0,0)$ to $(n,k)$ such at level $0$, the step set is $\{(1,0), (1,1)\}$, at level $1$ the step set is
$\{(1,0), (1,1),(2,1)\}$, and thereafter the step set is $\{(1,1), (2,0),(2,1)\}$. We find that the corresponding matrix, which begins
$$\left(
\begin{array}{ccccccc}
 1 & 0 & 0 & 0 & 0 & 0 & 0 \\
 1 & 1 & 0 & 0 & 0 & 0 & 0 \\
 1 & 3 & 1 & 0 & 0 & 0 & 0 \\
 1 & 5 & 4 & 1 & 0 & 0 & 0 \\
 1 & 7 & 9 & 5 & 1 & 0 & 0 \\
 1 & 9 & 16 & 14 & 6 & 1 & 0 \\
 1 & 11 & 25 & 30 & 20 & 7 & 1 \\
\end{array}
\right)$$ is the almost Riordan array
$$\left(\frac{1}{1-x}, \frac{1+x}{(1-x)^2}, \frac{x}{1-x}\right)$$ with row sums
$$1, 2, 5, 11, 23, 47, 95, 191, 383, 767, 1535,\ldots,$$ whose generating function is
$$\frac{1-x+x^2}{(1-x)(1-2x)}.$$ This sequence thus counts left factors of the paths of this example. The generating function of this triangle is given by
$$\frac{1-x+x^2y}{1-2x+x^2-xy+x^2y}.$$
\end{example}
\begin{example} In this example, we initially look at the array associated to the left factors of paths whose step set is $\{(2,0),(1,1)\}$ at level $0$, and subsequently has step set $\{(1,0),(1,1)\}$.
This is the array $$\left(\frac{1}{1-x^2}, \frac{x}{1-x}\right),$$ whose generating function is
$$\frac{1}{1-x^2-xy-x^2y}.$$ But this is the left factor array of paths with step set $\{(2,0),(1,1),(2,1)\}$ at all levels. Thus two (or more) specifications may lead to the same path set. We note that the row sums of $\left(\frac{1}{1-x^2}, \frac{x}{1-x}\right)$ are given by the Jacobsthal numbers.

Going back to the original specification, we now confine the step set $\{(1,0),(1,1)\}$ to level $1$, and use the step set $\{(1,0), (1,1), (2,1)\}$ subsequently. We arrive at a matrix that begins
$$\left(
\begin{array}{ccccccc}
 1 & 0 & 0 & 0 & 0 & 0 & 0 \\
 0 & 1 & 0 & 0 & 0 & 0 & 0 \\
 1 & 1 & 1 & 0 & 0 & 0 & 0 \\
 0 & 2 & 3 & 1 & 0 & 0 & 0 \\
 1 & 2 & 6 & 5 & 1 & 0 & 0 \\
 0 & 3 & 10 & 14 & 7 & 1 & 0 \\
 1 & 3 & 15 & 30 & 26 & 9 & 1 \\
\end{array}
\right).$$
This is the almost-Riordan array
$$\left(\frac{1}{1-x^2}, \frac{1+x}{(1-x^2)^2}, \frac{x(1+x)}{1-x}\right).$$
Its row sums \seqnum{A113225}
$$1, 1, 3, 6, 15, 35, 85, 204, 493, 1189, 2871,\ldots$$ have generating function $\frac{1-x-x^2}{(1-x^2)(1-2x-x^2)}$. The generating function of the left factor array is
$$\frac{1-x-x^2y}{1-x-x^2+x^3-x^2y+x^3y+x^4y}.$$
\end{example}
\begin{example} We consider the set of lattice paths from $(0,0)$ to $(n,k)$ such that at level $0$, the step set is $\{(1,-1), (1,1)\}$ (Dyck steps), at level $1$ the step set is
$\{(1,-1), (1,0),(1,1)\}$ (Motzkin steps), and thereafter the step set is $\{(1,-1), (2,0),(1,1)\}$ (Schroeder steps). We find that the corresponding matrix, which begins
$$\left(
\begin{array}{ccccccc}
 1 & 0 & 0 & 0 & 0 & 0 & 0 \\
 0 & 1 & 0 & 0 & 0 & 0 & 0 \\
 1 & 1 & 1 & 0 & 0 & 0 & 0 \\
 1 & 3 & 1 & 1 & 0 & 0 & 0 \\
 3 & 5 & 1 & 1 & 1 & 0 & 0 \\
 5 & 13 & 7 & 7 & 1 & 1 & 0 \\
13 & 25 & 25 & 9 & 9 & 1 & 1 \\
\end{array}
\right)$$
is the almost Riordan array
$$\left(1-\frac{x(1-2x-x^2-\sqrt{1-6x^2+x^4}}{2(1-2x-x^2)}, -\frac{1-2x-x^2-\sqrt{1-6x^2+x^4}}{2x(1-2x-x^2)}, \frac{1-x^2-\sqrt{1-6x^2+x^4}}{2x}\right).$$
It is interesting to note that the row sums of the Riordan array $(g(x), f(x))$ are the Pell numbers \seqnum{A000129} with generating function $\frac{1}{1-2x-x^2}$. The first column of $(g(x), f(x))$ is \seqnum{A026003}, which counts left-factors of Schroeder paths. The left factors of the paths of this example are counted by the sequence
$$1, 1, 3, 6, 15, 34, 83, 194, 471, 1114, 2699,\ldots$$
with generating function
$$\frac{2-3x+x^3+x \sqrt{1-6x^2+x^4}}{2(1-2x-x^2)}.$$
We note that
$$1-6x^2+x^4=(1-2x-x^2)(1+2x-x^2).$$
\end{example}

\section{Downward steps}
When downward steps are present, the nature of the generating functions changes. We give examples to make this explicit.
\begin{example} We consider Dyck paths, with steps $\{(1,1), (1,-1)\}$. The left factor matrix in this case, that enumerates Dyck paths from $(0,0)$ to $(n,k)$, is given by
 $$(c(x^2), xc(x^2)),$$ where $c(x)=\frac{1-\sqrt{1-4x}}{2x}$. The generating function of this Riordan array can be written
 $$\frac{c(x^2)}{1-yxc(x^2)}=\left(1-\frac{xc(x^2)}{y}\right) \frac{1}{1-xy-x/y}.$$
 Thus the rational expression $\frac{1}{1-x^1y^1-x^1y^{-1}}$ expressing the step set $\{(1,1), (1,-1)\}$ now appears as a factor in the generating function.
 \end{example}
 \begin{example} We consider Motzkin paths, with steps $\{(1,1),(1,0),(1,-1)\}$. The left factor matrix in this case, that enumerates Motzkin paths from $(0,0)$ to $(n,k)$, is given by the Motzkin matrix
 $(M(x), x M(x))$, where $M(x)=\frac{1-x-\sqrt{1-2x-3x^2}}{2x^2}$ is the generating function of the Motzkin numbers. The generating function of the Motzkin matrix can be written
 $$ \frac{M(x)}{1-yxM(x)}=\left(1-\frac{xM(x)}{y}\right)\frac{1}{1-x-xy-x/y}.$$
 Thus again we have that the rational quotient  $\frac{1}{1-x^1y^0-x^1y^1-x^1y^{-1}}$ that exhibits the step set $\{(1,1),(1,0),(1,-1)\}$ appears as a factor in the generating function.
 \end{example}
 \begin{example} The Schroeder paths, with step set $\{(2,0),(1,1),(1,-1)\}$, have their left factor matrix given by
 $(S(x^2), xS(x^2))$, where $S(x)=\frac{1-x-\sqrt{1-6x+x^2}}{2x}$. The generating function of this array can be written as
 $$\left(1-\frac{xS(x^2)}{y}\right)\frac{1}{1-x^2y^0-x^1y^1-x^1y^{-1}},$$  exhibiting the step set.

 If we start with the Schroeder matrix $(S(x), xS(x))$, we find that its generating function is given by
 $$\left(1-\frac{S(x)}{y}\right)\frac{1}{1-x-xy-1/y}.$$ We deduce that the Schroeder matrix $(S(x), xS(x))$ \seqnum{A080247}, which begins
 $$\left(
\begin{array}{ccccccc}
 1 & 0 & 0 & 0 & 0 & 0 & 0 \\
 2 & 1 & 0 & 0 & 0 & 0 & 0 \\
 6 & 4 & 1 & 0 & 0 & 0 & 0 \\
 22 & 16 & 6 & 1 & 0 & 0 & 0 \\
 90 & 68 & 30 & 8 & 1 & 0 & 0 \\
 394 & 304 & 146 & 48 & 10 & 1 & 0 \\
 1806 & 1412 & 714 & 264 & 70 & 12 & 1 \\
\end{array}
\right),$$
 counts paths from $(0,0)$ to $(n,k)$ with step set $\{(1,1), (1,0), (0,-1)\}$.
 \end{example}
 \begin{example} The Catalan $(c(x), xc(x))$ has a generating function that can be expressed as
 $$\frac{c(x)}{1-yxc(x)}=\left(1-\frac{c(x)}{y}\right)\frac{1}{1-xy-y^{-1}}.$$
 This indicates that the corresponding matrix $(t_{n,k})$ satisfies the recurrence
 $$t_{n,k}=t_{n-1,k-1}+t_{n,k+1},$$ subject to $t_{0,0}=1$ and $t_{n,k}=0$ if $n<0$ or if $k<0$ or if $k>n$. The rational quotient
 $\frac{1}{1-xy-1/y}$ in the generating function exhibits the step set $\{(1,1), (0,-1)\}$. This Catalan matrix then counts paths from $(0,0)$ to $(n,k)$ for this step set. See Figure \ref{Cat}.

\begin{figure}
\begin{center}
\begin{tikzpicture}
\draw(0,0)--(6,0);
\draw(0,0)--(6,6);
\draw(1,0)--(1,1.2);
\draw(2,0)--(2,2.2);
\draw(3,0)--(3,3.2);
\draw(4,0)--(4,4.2);
\draw(5,0)--(5,5.2);
\draw(6,0)--(6,6.2);

\draw(1,1)--(6.2,1);
\draw(2,2)--(6.2,2);
\draw(3,3)--(6.2,3);
\draw(4,4)--(6.2,4);
\draw(5,5)--(6.2,5);

\draw(1,0)--(6,5);
\draw(2,0)--(6,4);
\draw(3,0)--(6,3);
\draw(4,0)--(6,2);
\draw(5,0)--(6,1);

\draw[red, thick, postaction={decorate},
      decoration={markings, mark=at position 0.5 with {\arrow{>}}}]
      (1,1) -- (1,0);

\draw[red, thick, postaction={decorate},
      decoration={markings, mark=at position 0.5 with {\arrow{>}}}]
      (2,2) -- (2,1);

\draw[red, thick, postaction={decorate},
      decoration={markings, mark=at position 0.5 with {\arrow{>}}}]
      (2,1) -- (2,0);

\draw[red, thick, postaction={decorate},
      decoration={markings, mark=at position 0.5 with {\arrow{>}}}]
      (3,3) -- (3,2);

\draw[red, thick, postaction={decorate},
      decoration={markings, mark=at position 0.5 with {\arrow{>}}}]
      (3,2) -- (3,1);

\draw[red, thick, postaction={decorate},
      decoration={markings, mark=at position 0.5 with {\arrow{>}}}]
      (3,1) -- (3,0);

\draw[blue, thick, postaction={decorate},
      decoration={markings, mark=at position 0.5 with {\arrow{>}}}]
      (0,0) -- (1,1);

\draw[blue, thick, postaction={decorate},
      decoration={markings, mark=at position 0.5 with {\arrow{>}}}]
      (1,1) -- (2,2);

\draw[blue, thick, postaction={decorate},
      decoration={markings, mark=at position 0.5 with {\arrow{>}}}]
      (1,0) -- (2,1);

\draw[blue, thick, postaction={decorate},
      decoration={markings, mark=at position 0.5 with {\arrow{>}}}]
      (2,1) -- (3,2);

\draw[blue, thick, postaction={decorate},
      decoration={markings, mark=at position 0.5 with {\arrow{>}}}]
      (2,0) -- (3,1);

\draw[blue, thick, postaction={decorate},
      decoration={markings, mark=at position 0.5 with {\arrow{>}}}]
      (2,2) -- (3,3);

\node at (0,-.3) {1};
\node at (1,1.3) {1};
\node at (2,2.3) {1};
\node at (3,3.3) {1};
\node at (4,4.3) {1};
\node at (5,5.3) {1};
\node at (6,6.3) {1};

\node at (1,-.3) {1};
\node at (2,-.3) {2};
\node at (3,-.3) {5};
\node at (4,-.3) {14};
\node at (5,-.3) {42};

\node at (0,-.3) {1};
\node at (1,1.3) {1};
\node at (2,2.3) {1};
\node at (3,3.3) {1};
\node at (4,4.3) {1};
\node at (5,5.3) {1};
\node at (6,6.3) {1};

\node at (2,1.3) {2};
\node at (3,1.3) {5};

\node at (3,2.3) {3};

\node at (4,1.3) {14};
\node at (4,2.3) {9};
\node at (4,3.3) {4};

\node at (5,1.3) {42};
\node at (5,2.3) {28};
\node at (5,3.3) {14};
\node at (5,4.3) {5};
\end{tikzpicture}
\end{center}
\caption{The Catalan matrix $(c(x),xc(x))$ as a path matrix}\label{Cat}
\end{figure}
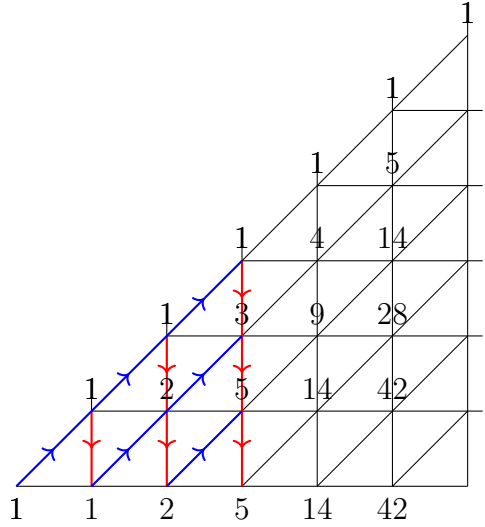
 \end{example}
 We end this section with an example which begins with a generating function, and seeks to reverse the foregoing procedure. Thus we consider the Riordan array
 $$(g(x), xg(x))=\left(\frac{1-x-\sqrt{1-6x+5x^2}}{2x}, \frac{1-x-\sqrt{1-6x+5x^2}}{2}\right).$$
 The generating function of this array can be expressed as
 $$\left(1-x-\frac{g(x)}{y}\right)\frac{1}{1-x-xy+x^2y-1/y}.$$ This leads us to consider the recurrence
 $$t_{n,k}=t_{n-1,k}+t_{n-1,k}-t_{n-2,k-1}+t_{n,k+1},$$ subject to $t_{0,0}=1$ and $t_{n,k}=0$ if $n<0$ or if $k<0$ or if $k>n$.
 We find that the matrix $(t_{n,k})$ is the matrix
 $$(\tilde{g}(x), xg(x))=\left(\frac{1-x-\sqrt{1-6x+5x^2}}{2x(1-x)}, \frac{1-x-\sqrt{1-6x+5x^2}}{2}\right),$$ which begins
$$\left(
\begin{array}{ccccccc}
 1 & 0 & 0 & 0 & 0 & 0 & 0 \\
 2 & 1 & 0 & 0 & 0 & 0 & 0 \\
 5 & 3 & 1 & 0 & 0 & 0 & 0 \\
15 & 10 & 4 & 1 & 0 & 0 & 0 \\
51 & 36 & 16 & 5 & 1 & 0 & 0 \\
188 & 137 & 65 & 23 & 6 & 1 & 0 \\
731 & 543 & 269 & 103 & 31 & 7 & 1 \\
\end{array}
\right).$$
The generating function of the Riordan array $(\tilde{g}(x), xg(x))$ can be expressed as
$$\left(1-\frac{\tilde{g}(x)}{y}\right)\frac{1}{1-x-xy+x^2y-1/y}.$$
The above matrix then corresponds to paths from $(0,0)$ to $(n,k)$ for the special step set $\{(1,0), (1,1), (-1)*(2,1),(0,-1)\}$.

\section{A result on equinumerous lattice paths}
In this section we describe a result that relate colored Motzkin and Schroeder paths to a third family of lattice paths. For this, we consider the generating function
$$g(x)=g(x;r,s)=\frac{1-rx-\sqrt{1-2(r+2s)x+r^2x^2}}{2sx},$$ and the Bell matrix $(g(x), xg(x))$.
The generating function $g(x)$ can be described by both a Jacobi continued fraction and a Thron continued fraction. We have
$$g(x)=\cfrac{1}{1-rx-\cfrac{sx}{1-rx-\cfrac{sx}{1-rx-\cdots}}},$$ showing that $g(x)$ enumerates $(r,s)$-Schroeder lattice paths \cite{Petreolle}. We also have
$$g(x)=\cfrac{1}{1-(r+s)x-\cfrac{s(r+s)x^2}{1-(r+2s)x-\cfrac{s(r+s)x^2}{1-(r+2s)x-\cfrac{s(r+s)x^2}{1-(r+2s)x-\cdots}}}}.$$ This shows that $g(x)$ enumerates generalized Motzkin paths with
$r+s$ types of level steps at level $0$, $r+2s$ level steps at subsequent levels, and $s(r+s)$ types of down steps.
We now consider the bivariate generating function $\frac{g(x;r,s)}{1-yxg(x;r,s)}$ of the Bell matrix $(g(x), xg(x))$. This can be expressed as
$$\frac{g(x;r,s)}{1-yxg(x;r,s)}=\left(1-\frac{sg(x)}{y}\right)\frac{1}{1-rx-xy-s/y}.$$
Thus the Bell matrix $(g(x), xg(x))$ counts lattice paths from $(0,0)$ to $(n,k)$ with $r$ types of $(1,0)$ step, a $(1,1)$ step, and $s$ types of $(0,-1)$ step. In particular, the number of such paths from $(0,0)$ to $(n,0)$ is equinumerous with the number of Motzkin and Schroeder paths above.
\begin{example} For $r=0$ and $s=1$, we obtain $g(x)=c(x)$, and then the Bell matrix $(g(x), xg(x))$ is the Catalan matrix $(c(x), xc(x))$.
When $r=1$ and $s=1$, we obtain $g(x)=S(x)$ and $(g(x),xg(x))=(S(x),xS(x))$.
\end{example}

\section{Using the $A$-matrix}
We can approach the above considerations from the point of view of the $A$-matrix formulation of the Riordan arrays. The lattice paths above satisfy the recurrence
$$t_{n,k}=t_{n-1,k-1}+r t_{n-1,k}+s t_{n,k+1},$$ subject to $t_{n,k}=0$ if $n<0$ or $k<0$ or $k>n$, and $t_{0,0}=1$. Using the $A$-matrix approach, this gives us the equation
$$\frac{u}{x}=1+ru+\frac{su^2}{x},$$ whose solution is $u(x)=x g(x;r,s)$.

In general, we have the following schema.
$$\left(
\begin{array}{ccccccc}
\vdots & \cdots & \cdots & \cdots & \cdots & \cdots & \cdots \\
x^2 & \cdots & t_{n-3,k-1} & t_{n-3,k} & t_{n-3,k+1} & t_{n-3,k+2} & \cdots \\
x & \cdots & t_{n-2,k-1} & t_{n-2,k} & t_{n-2,k+1} & t_{n-2,k+2} & \cdots \\
1 & \cdots & t_{n-1,k-1} & t_{n-1,k} & t_{n-1,k+1} & t_{n-1,k+2} & \cdots \\
1/x & \cdots & t_{n,k-1} & t_{n,k} & t_{n,k+1} & t_{n,k+2} & \cdots \\
1/x^2 & \cdots &  &  &  & t_{n+1,k+2} & \cdots \\
 &  &  &  &  &  &  \\
\cdots & \cdots & 1 & u & u^2 & u^3 & \cdots \\
\end{array}
\right).$$
Thus if, for instance, we have
$$t_{n,k}=t_{n-1,k-1}+ \alpha t_{n-2,k}+ \beta t_{n-2,k+1}+ \gamma t_{n,k+1},$$ with standard initial conditions, then we have
$$\frac{u}{x}=1+\alpha xu+\beta xu^2+ \gamma \frac{u^2}{x},$$ to give
$$u=f(x)=\frac{1-\alpha x^2-\sqrt{1-4 \gamma x - 2\alpha x^2-4 \beta x^3+\alpha^2 x^4}}{2(\gamma+\beta x^2)}.$$
We note that in this case we have
$$g(x)=\frac{f(x)}{x}=\frac{1}{1-\alpha x^2}c\left(\frac{x(\gamma+\beta x^2)}{(1-\alpha x^2)^2}\right).$$
Then the Hankel transform of the expansion of $g(x)$ is a
$$(\alpha \gamma+ \beta+2\gamma^3)^2, -\alpha^3 \gamma^2-\alpha^2\gamma(2\beta+3\gamma^3)-\alpha(\beta^2+6\beta \gamma^3+4 \gamma^6)-\gamma(2\beta^2+6\beta \gamma^3+3\gamma^6)$$
Somos-$4$ sequence.

The Riordan array $(g(x), xg(x))$ then counts lattice paths from $(0,0)$ to $(n,k)$ with steps $(1,1)$, $\alpha$ types of step $(2,0)$, $\beta$ types of step $(2,-1)$ and $\gamma$ types of step $(0,-1)$.

\section{Two matrices}
We consider the two equations
$$\frac{u}{x}=1+u+\frac{u^2}{x},$$ with solution
$$u(x)=xg(x)=xS(x),$$
and the equation
$$\frac{u}{x}=1+xu+\frac{u^2}{x},$$ with solution
$$u(x)=xg(x)=\frac{1-x-\sqrt{1-4x-2x^2+x^4}}{2}.$$
These equations correspond to Riordan arrays $(g(x), xg(x))$ that are the left factor arrays, respectively, for paths with step set $\{(1,1), (1,0), (0,-1)\}$, respectively, $\{(1,1), (2,0), (0,-1)\}$.
The expansions of $g(x)$ correspond, respectively, to the row sums and the diagonal sums of the matrix \seqnum{A060693} with generating function
$$\frac{1-xy-\sqrt{1-4x-2xy+x^2y^2}}{2x},$$ which begins
$$\left(
\begin{array}{ccccccc}
1 & 0 & 0 & 0 & 0 & 0 & 0 \\
1 & 1 & 0 & 0 & 0 & 0 & 0 \\
2 & 3 & 1 & 0 & 0 & 0 & 0 \\
5 & 10 & 6 & 1 & 0 & 0 & 0 \\
14 & 35 & 30 & 10 & 1 & 0 & 0 \\
42 & 126 & 140 & 70 & 15 & 1 & 0 \\
132 & 462 & 630 & 420 & 140 & 21 & 1 \\
\end{array}
\right).$$ The general term of this matrix is
$$\frac{\binom{2n-k}{k} \binom{2n-2k}{n-k}}{n-k+1}.$$ The row sums are the Schroeder numbers \seqnum{A006318}
$$1, 2, 6, 22, 90, 394, 1806, 8558,\ldots,$$ and the diagonal sums are the sequence \seqnum{A143330}
$$1, 1, 3, 8, 25, 83, 289, 1041,\ldots.$$ We have already seen that the first column numbers (the Catalan numbers) correspond to lattice paths with the step set $\{1,1),(0,-1)\}$.
We note that the above matrix is equal to the Narayana matrix \seqnum{A090181} multiplied by the binomial matrix (on the right).

We now consider the two equations
$$\frac{u}{x}=1+u+\frac{u^3}{x^2},$$ with solution
$$u(x)=xg(x)=\frac{2}{\sqrt{3}}\sqrt{x(1-x)}\sin\left(\frac{1}{3}\sin^{-1}\left(\frac{3\sqrt{3}}{2} \frac{\sqrt{x(1-x)}}{(1-x)^2}\right)\right),$$
and the equation
$$\frac{u}{x}=1+xu+\frac{u^3}{x^2},$$ with solution
$$u(x)=xg(x)=\frac{2}{\sqrt{3}}\sqrt{x(1-x^2)}\sin\left(\frac{1}{3}\sin^{-1}\left(\frac{3\sqrt{3}}{2} \frac{\sqrt{x(1-x^2)}}{(1-x^2)^2}\right)\right).$$
We obtain the sequences (expansion of $g(x)$), respectively, \seqnum{A346626}
$$1, 2, 8, 44, 280, 1936, 14128, 107088, 834912,\ldots,$$ and
\seqnum{A364474}
$$1, 1, 4, 16, 77, 403, 2228, 12800, 75653, 457022, 2809266,\ldots.$$
These equations correspond to Riordan arrays $(g(x), xg(x))$ that are the left factor arrays, respectively, for paths with step set $\{(1,1), (1,0), (-1,-2)\}$, respectively, $\{(1,1), (2,0), (-1,-2)\}$.
The corresponding recurrences in the case are
$$t_{n,k}=t_{n-1,k-1}+t_{n-1,k}+t_{n+1,k+2},$$ respectively
$$t_{n,k}=t_{n-1,k-1}+t_{n-2,k}+t_{n+1,k+2}.$$
These sequences then appear as the row sums and the diagonal sums of the array $\mathbf{T}$ with generating function
$$\frac{2}{\sqrt{3}}\sqrt{x(1-xy)}\sin\left(\frac{1}{3}\sin^{-1}\left(\frac{3\sqrt{3}}{2} \frac{\sqrt{x(1-xy)}}{(1-xy)^2}\right)\right).$$
This matrix begins
$$\left(
\begin{array}{ccccccc}
 1 & 0 & 0 & 0 & 0 & 0 & 0 \\
 1 & 1 & 0 & 0 & 0 & 0 & 0 \\
 3& 4 & 1 & 0 & 0 & 0 & 0 \\
12 & 21 & 10 & 1 & 0 & 0 & 0 \\
55 & 120 & 84 & 20 & 1 & 0 & 0 \\
273 &715 & 660 & 252 & 35 & 1 & 0 \\
1428 & 4368 & 5005 & 2640 & 630 & 56 & 1 \\
\end{array}\right).$$
The general term of this matrix is
$$\frac{\binom{3n-2k}{k} \binom{3n-3k}{n-k}}{2n-2k+1}.$$ The first column entries of this matrix, namely the ternary numbers $\frac{1}{2n+1}\binom{3n}{n}$ \seqnum{A001764},  correspond to lattice paths from $(0,0)$ to $(n,0)$ with step set $\{(1,1), (-1,-2)\}$. This can be seen because we have
$$\frac{u}{x}=1+\frac{u^3}{x^2},$$ or, equivalently,
$$g=1+xg^3.$$ The corresponding left factor matrix is the ternary matrix $(t(x), xt(x))$ \seqnum{A110616} that begins
$$\left(
\begin{array}{ccccccc}
 1 & 0 & 0 & 0 & 0 & 0 & 0 \\
 1 & 1 & 0 & 0 & 0 & 0 & 0 \\
 3 & 2 & 1 & 0 & 0 & 0 & 0 \\
12 & 7 & 3 & 1 & 0 & 0 & 0 \\
55 & 30 & 12 & 4 & 1 & 0 & 0 \\
273 & 143 & 55 & 18 & 5 & 1 & 0 \\
1428 & 728 & 273 & 88 & 25 & 6 & 1 \\
\end{array}\right),$$
where
$$t(x)=\frac{2}{\sqrt{3x}} \sin\left(\frac{1}{3}\sin^{-1}\left(\frac{\sqrt{27x}}{2}\right)\right)$$ is the generating function of ternary numbers. See Figure \ref{Ter}. The ternary matrix satisfies the recurrence
$$t_{n,k}=t_{n-1,k-1}+t_{n+1,k+2},$$ where $t_{n,k}=0$ if $n<0$ or $k<0$ or $k>n$, and $t_{0,0}=1$. It is clear how these considerations may be extended to the study of Fuss-Catalan numbers and lattice paths.

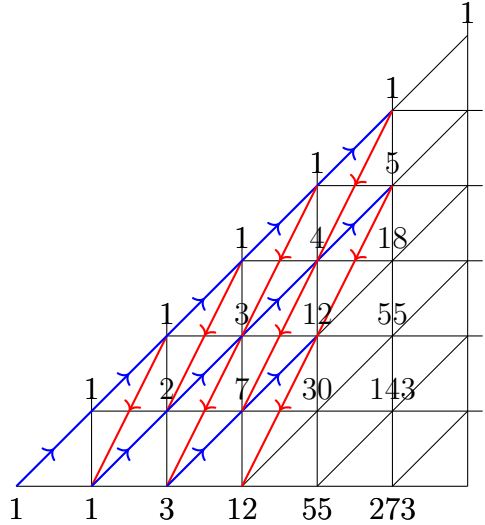
\begin{figure}
\begin{center}
\begin{tikzpicture}
\draw(0,0)--(6,0);
\draw(0,0)--(6,6);
\draw(1,0)--(1,1.2);
\draw(2,0)--(2,2.2);
\draw(3,0)--(3,3.2);
\draw(4,0)--(4,4.2);
\draw(5,0)--(5,5.2);
\draw(6,0)--(6,6.2);

\draw(1,1)--(6.2,1);
\draw(2,2)--(6.2,2);
\draw(3,3)--(6.2,3);
\draw(4,4)--(6.2,4);
\draw(5,5)--(6.2,5);

\draw(1,0)--(6,5);
\draw(2,0)--(6,4);
\draw(3,0)--(6,3);
\draw(4,0)--(6,2);
\draw(5,0)--(6,1);

\draw[red, thick, postaction={decorate},
      decoration={markings, mark=at position 0.5 with {\arrow{>}}}]
      (2,2) -- (1,0);

\draw[red, thick, postaction={decorate},
      decoration={markings, mark=at position 0.5 with {\arrow{>}}}]
      (3,3) -- (2,1);

\draw[red, thick, postaction={decorate},
      decoration={markings, mark=at position 0.5 with {\arrow{>}}}]
      (3,2) -- (2,0);

\draw[blue, thick, postaction={decorate},
      decoration={markings, mark=at position 0.5 with {\arrow{>}}}]
      (0,0) -- (1,1);

\draw[blue, thick, postaction={decorate},
      decoration={markings, mark=at position 0.5 with {\arrow{>}}}]
      (1,1) -- (2,2);

\draw[blue, thick, postaction={decorate},
      decoration={markings, mark=at position 0.5 with {\arrow{>}}}]
      (2,2) -- (3,3);

\draw[blue, thick, postaction={decorate},
      decoration={markings, mark=at position 0.5 with {\arrow{>}}}]
      (2,1) -- (3,2);

\draw[blue, thick, postaction={decorate},
      decoration={markings, mark=at position 0.5 with {\arrow{>}}}]
      (1,0) -- (2,1);

\draw[blue, thick, postaction={decorate},
      decoration={markings, mark=at position 0.5 with {\arrow{>}}}]
      (3,1) -- (4,2);

\draw[blue, thick, postaction={decorate},
      decoration={markings, mark=at position 0.5 with {\arrow{>}}}]
      (2,0) -- (3,1);

\draw[blue, thick, postaction={decorate},
      decoration={markings, mark=at position 0.5 with {\arrow{>}}}]
      (3,2) -- (4,3);

\draw[blue, thick, postaction={decorate},
      decoration={markings, mark=at position 0.5 with {\arrow{>}}}]
      (4,3) -- (5,4);

\draw[blue, thick, postaction={decorate},
      decoration={markings, mark=at position 0.5 with {\arrow{>}}}]
      (3,3) -- (4,4);

\draw[blue, thick, postaction={decorate},
      decoration={markings, mark=at position 0.5 with {\arrow{>}}}]
      (4,4) -- (5,5);

\draw[red, thick, postaction={decorate},
      decoration={markings, mark=at position 0.5 with {\arrow{>}}}]
      (5,4) -- (4,2);

\draw[red, thick, postaction={decorate},
      decoration={markings, mark=at position 0.5 with {\arrow{>}}}]
      (4,2) -- (3,0);

\draw[red, thick, postaction={decorate},
      decoration={markings, mark=at position 0.5 with {\arrow{>}}}]
      (4,4) -- (3,2);

\draw[red, thick, postaction={decorate},
      decoration={markings, mark=at position 0.5 with {\arrow{>}}}]
      (5,5) -- (4,3);

\draw[red, thick, postaction={decorate},
      decoration={markings, mark=at position 0.5 with {\arrow{>}}}]
      (4,3) -- (3,1);

\node at (0,-.3) {1};
\node at (1,1.3) {1};
\node at (2,2.3) {1};
\node at (3,3.3) {1};
\node at (4,4.3) {1};
\node at (5,5.3) {1};
\node at (6,6.3) {1};

\node at (1,-.3) {1};
\node at (2,-.3) {3};
\node at (3,-.3) {12};
\node at (4,-.3) {55};
\node at (5,-.3) {273};

\node at (0,-.3) {1};
\node at (1,1.3) {1};
\node at (2,2.3) {1};
\node at (3,3.3) {1};
\node at (4,4.3) {1};
\node at (5,5.3) {1};
\node at (6,6.3) {1};

\node at (1,-.3) {1};
\node at (2,-.3) {3};
\node at (3,-.3) {12};
\node at (4,-.3) {55};
\node at (5,-.3) {273};

\node at (2,1.3) {2};

\node at (3,1.3) {7};
\node at (3,2.3) {3};

\node at (4,1.3) {30};
\node at (4,2.3) {12};
\node at (4,3.3) {4};

\node at (5,1.3) {143};
\node at (5,2.3) {55};
\node at (5,3.3) {18};
\node at (5,4.3) {5};
\end{tikzpicture}
\end{center}
\caption{The ternary matrix $(t(x), xt(x))$ as a path matrix}\label{Ter}
\end{figure}

Note that by multiplying the matrix  $\mathbf{T}$ (on the right) by the inverse binomial matrix, we obtain a ternary analog of the Narayana matrix. This ternary analog begins
$$\left(
\begin{array}{ccccccc}
1 & 0 & 0 & 0 & 0 & 0 & 0 \\
0 & 1 & 0 & 0 & 0 & 0 & 0 \\
0 & 2 & 1 & 0 & 0 & 0 & 0 \\
0 & 4 & 7 & 1 & 0 & 0 & 0 \\
0 & 8 & 30& 16 & 1 & 0 & 0 \\
0 &16 & 104 & 122 & 30 & 1 & 0 \\
0 & 32 &320 & 660 & 365 & 50 & 1 \\
\end{array}\right).$$ This matrix has generating function
$$\frac{2}{\sqrt{3}}\sqrt{x(1-x(y-1))}\sin\left(\frac{1}{3}\sin^{-1}\left(\frac{3\sqrt{3}}{2} \frac{\sqrt{x(1-x(y-1))}}{(1-x(y-1))^2}\right)\right).$$ Setting $y=1$ shows that the row sums of this matrix are the ternary numbers. The embedded matrix $(t_{n,k})_{1 \le n,k \le \infty}$ is the matrix \seqnum{A091320}, which counts the number of non-crossing trees with $n$ edges and $k$ leaves.

\section{Conclusions} We have seen that many Riordan arrays can be associated to families of lattice paths. The rectification and triangulation of such matrices are associated with related families of lattice paths. For certain Riordan arrays, we can express their generating functions in a way that directly exhibits the defining parameters of the associated lattice paths. In these instances, there is also a direct link to a defining $A$-matrix.

\bigskip
\hrule

\noindent 2010 {\it Mathematics Subject Classification}:
Primary 05A1f5; Secondary 15B36, 11B37, 11B83.
\noindent \emph{Keywords:} Riordan array, lattice path, generating function, linear recurrence.

\bigskip
\hrule
\bigskip
\noindent (Concerned with sequences
\seqnum{A000045},
\seqnum{A000073},
\seqnum{A000108},
\seqnum{A000129},
\seqnum{A001045},
\seqnum{A001353},
\seqnum{A001764},
\seqnum{A002605},
\seqnum{A006190},
\seqnum{A006318},
\seqnum{A007482},
\seqnum{A080247},
\seqnum{A008288},
\seqnum{A026003},
\seqnum{A033184},
\seqnum{A039599},
\seqnum{A064189},
\seqnum{A090181},
\seqnum{A091320},
\seqnum{A110616},
\seqnum{A111961},
\seqnum{A143330},
\seqnum{A191649}, and
\seqnum{A321621}
).


\begin{thebibliography}{99}


\bibitem{book} P. Barry, \emph{Riordan Arrays: a Primer}, Logic Press, 2017.

\bibitem{almost} P. Barry, On the group of almost-Riordan arrays, https://arxiv.org/abs/1606.05077.

\bibitem{Rook} J. P. S. Kung and A. de Mier, Catalan lattice paths with rook, bishop and spider steps, \emph{Journal of Combinatorial Theory, Series A} \textbf{120} (2013) 379–-389.

\bibitem{He_A} Tian-Xiao He, R. Sprugnoli, Sequence characterization of Riordan arrays, \emph{Discrete Maths.}, \textbf{309} (2009), 3962–-3974.

\bibitem{He_M} Tian-Xiao He, Matrix characterizations of Riordan arrays, \emph{Linear Algebra Appl.}, \textbf{465} (2015), 15--42.

\bibitem{Layman} J. W. Layman, The Hankel transform and some of its properties, \emph{J. Integer Seq.}, \textbf{4} (2001),
\href{https://www.cs.uwaterloo.ca/journals/JIS/VOL4/LAYMAN/hankel.html} {Article 01.1.5}.

\bibitem{Luschny} P. Luschny, http://luschny.de/math/seq/RiordanSquare.html

\bibitem{Merlini} D. Merlini, D. G. Rogers, R. Sprugnoli, M. C. Verri, On Some Alternative Characterizations of Riordan Arrays, \emph{Canad. J. Math.}, \textbf{49} (1997), 301--320.

\bibitem{Method} D. Merlini, R. Sprugnoli, and M.~C. Verri, The method of coefficients, \emph{Amer. Math. Monthly}, \textbf{114} (2007), 40--57.

\bibitem{LI} D. Merlini, R. Sprugnoli, and M.~C.~Verri, Lagrange inversion: when and how, \emph{Acta Appl. Math.}, \textbf{94} (2006),  233--249.

\bibitem{Petreolle} M. P\'etr\'eolle, A. D. Sokal, and Bao-Xuan Zhu, Lattice paths and branched continued fractions:
 An infinite sequence of generalizations of the Stieltjes-Rogers and Thron-Rogers polynomials,
 with coefficientwise Hankel-total positivity, https://arxiv.org/pdf/1807.03271.

 \bibitem{Rogers} D. G. Rogers, Pascal triangles, Catalan numbers and renewal arrays, \emph{Discrete Math.}, \textbf{22} (1978), 301--310.

\bibitem{book2} L. Shapiro, R. Sprugnoli, P. Barry, G.-S. Cheon, T.-X. He, D. Merlini, and W. Wang, \emph{The Riordan Group and Applications}, Springer, 2022.

\bibitem{Survey} L. Shapiro, A survey of the Riordan group, available electronically at
\href{http://www.combinatorics.cn/activities/Riordan\%20Group.pdf} {Center for Combinatorics}, Nankai University, 2018.

\bibitem{SGWW} L. W. Shapiro, S. Getu, W-J. Woan, and L.C. Woodson,
The Riordan group, \emph{Discr. Appl. Math.}, \textbf{34} (1991),
 229--239.

\bibitem{SL1} N. J. A.~Sloane, \emph{The
On-Line Encyclopedia of Integer Sequences}. Published electronically
at \texttt{http://oeis.org}, 2021.

\bibitem{SL2} N. J. A.~Sloane, The On-Line Encyclopedia of Integer
Sequences, \emph{Notices Amer. Math. Soc.}, \textbf{50} (2003),  912--915.

\end{thebibliography}
\end{document}